\numberwithin{equation}{section}
\newtheorem{thm}[subsection]{Theorem}
\newtheorem{defn}[subsection]{Definition}
\newtheorem{prop}[subsection]{Proposition}
\newtheorem{cor}[subsection]{Corollary}
\newtheorem{lemma}[subsection]{Lemma}
\newtheorem{remark}[subsection]{Remark}
\newcommand{\cC}{{\cat C}}
\theoremstyle{definition}
\newtheorem{example}[subsection]{Example}
\newcommand{\cat}{\mathcal}
\newcommand{\lra}{\longrightarrow}
\newcommand{\llra}[1]{\stackrel{#1}{\lra}} 
\newcommand{\Z}{\mathbb{Z}}
\DeclareMathOperator{\id}{id}
\DeclareMathOperator{\Hom}{Hom}
\DeclareMathOperator{\Baar}{Bar}
\DeclareMathOperator{\Ho}{Ho}
\DeclareMathOperator{\Ext}{Ext}
\newcommand{\ra}{\rightarrow}
\newcommand{\rf}{\right\rfloor}
\newcommand{\lf}{\left\lfloor}
\newcommand{\we}{\llra{\sim}}
\begin{document}

\title{Realizing modules over the homology of a DGA}

\author{Gustavo Granja}
\thanks{Supported in part by FCT Portugal through program POCI 2010/FEDER and grant
POCI/MAT/58497/2004.}
\address{Centro de An\'alise Matem\'atica, Geometria e Sistemas Din\^amicos, Departamento de Matem\'atica, Instituto Superior T\'ecnico, Tech. Univ. Lisbon, Portugal}
\email{ggranja@math.ist.utl.pt}
\author{Sharon Hollander}\thanks{Supported by a Golda Meir postdoctoral fellowship.}
\address{Einstein Institute of Mathematics, Hebrew University, Jerusalem, Israel}
\curraddr{Centro de An\'alise Matem\'atica, Geometria e Sistemas Din\^amicos, Departamento de Matem\'atica, Instituto Superior T\'ecnico, Tech. Univ. Lisbon, Portugal}
\email{sjh@math.ist.utl.pt}

\date{July 23, 2007}

\keywords{Postnikov systems, $A_n$-structures, Rigidification of diagrams. Mathematics Subject
 Classification 2000: 55S35, 55U15, 16E45.}

\begin{abstract}
Let $A$ be a DGA over a field and $X$ a module over $H_*(A)$. Fix an
$A_\infty$-structure on $H_*(A)$ making it quasi-isomorphic to $A$. 
We construct an equivalence of categories 
between $A_{n+1}$-module structures on $X$ and length $n$ Postnikov systems
in the derived category of $A$-modules based on the bar resolution of $X$. 
This implies that quasi-isomorphism classes of $A_n$-structures on $X$ are in bijective correspondence with weak equivalence classes of rigidifications of the first $n$ terms of the 
 bar resolution of $X$ to a complex of $A$-modules.
The above equivalences of categories are compatible for different values of $n$. This
implies that two obstruction theories for realizing $X$ as the homology of an $A$-module coincide.
\end{abstract}

\maketitle

\section{Introduction}

Let $A$ be a differential graded algebra over a field $k$ and let $R=H_*(A)$ be its homology.
We say that an $R$-module $X$ is \emph{realizable} if there exists a differential graded 
module $M$ over $A$ with $H_*(M)\simeq X$. This paper deals with two obstruction theories for answering the question of whether or not a module is realizable. 

One obstruction theory is based on the theory of $A_n$-structures. 
In \cite{St}, Stasheff introduced a 
hierarchy of higher homotopy associativity conditions for multiplications on chain complexes.
An $A_2$-structure is just a bilinear multiplication $m_2$, while an $A_3$-structure is an $A_2$-structure together with a homotopy $m_3$ between the two ways of bracketing a 3-fold product. An
$A_\infty$-structure consists of a sequence of higher associating homotopies $m_n$ 
satisfying certain conditions (see Section 2 for the definitions and also \cite{Ke} for an excellent introduction to the theory of $A_\infty$-algebras and modules).

Kadeishvili proved \cite{Ka} that there is a an $A_\infty$-structure on $H_*(A)$ making it quasi-isomorphic  to $A$ as an $A_\infty$-algebra. Such an equivalence induces an equivalence of derived categories of $A_\infty$-modules and the derived category of (homologically unital) $A_\infty$-modules over $A$ is equivalent to the usual derived category of DG modules over $A$. This implies that a module $X$ is realizable if and only if it admits the structure of an $A_\infty$-module over $H_*(A)$. The $H_*(A)$-module structure on $X$ makes it an $A_2$-module over the $A_\infty$-algebra $H_*(A)$ and so the problem of realizability is naturally broken down into the problem of extending an $A_n$-module structure on $X$ to an $A_{n+1}$-structure for successive $n$.

Given an $A_n$-structure on $X$, the obstruction to extending the underlying $A_{n-1}$-structure to an $A_{n+1}$-structure lies in $\Ext^{n,n-2}(X,X)$ (see Corollary \ref{hochschild}).
The original motivation for this paper was the observation that this first obstruction, i.e.
the obstruction to extending the given $A_2$-structure to an $A_4$-structure,  
is the primary obstruction to realizability described by other means in a recent paper of Benson, Krause and Schwede \cite{BKS}.

In \cite[Appendix A]{BKS}, the authors describe a general obstruction theory for realizability based
on the notion of a Postnikov system (see Definition \ref{postnikovdef}). This approach has its roots in stable homotopy theory (see \cite{Wh} for example). The basic idea is the following.
Since free modules and maps between them are clearly realizable, we can realize a free resolution for $X$ in the derived category of $A$-modules. The problem is then whether this "chain complex up to homotopy" can be rigidified to an actual chain complex of $A$-modules. This is explained in detail in Appendix \ref{appen} (see also \cite{BKS}). The Postnikov system approach can be applied more generally \cite[Appendix A]{BKS} to the problem of realizing modules over endomorphism rings of compact objects in triangulated categories.

Benson, Krause and Schwede define a canonical Hochschild class\footnote{The class in \cite{BKS} is 
actually in $HH^{3,-1}(H^*(A))$. This is because of our convention (which follows
\cite{We}) that the $k$-th shift $X[k]$ is the $k$-th desuspension of $X$ in the derived category, while in \cite{BKS} it is the $k$-th suspension.}  $\gamma_A \in HH^{3,1}(H_*(A))$ and show that the primary obstruction to building a Postnikov system for $X$ is the cup product $\id_X \cup \gamma_A  \in \Ext^{3,1}(X,X)$. Looking more closely one sees that the cocycles representing $\gamma_A$ are (up to sign) precisely the $A_3$-structures on $H_*(A)$ which extend to an $A_\infty$-structure quasi-isomorphic to $A$. It turns out that the primary obstruction $\id_X \cup \gamma_A$ to realizing $X$ is the obstruction to putting an $A_4$-module structure on $X$ (over any of these quasi-isomorphic $A_\infty$-structures on $H_*(A)$) and so the primary obstructions to realizing a module coincide from the two points of view. A natural question is then whether the two obstruction theories described above coincide in general. We answer this question in the affirmative.

We construct a functor from $A_n$-modules over $H_*(A)$ to filtered differential
 graded $A$-modules. This filtration gives rise to an $(n-1)$-Postnikov 
 system for $X$ and our main result is that this functor induces an equivalence of categories. 

\begin{thm}
\label{main}
Let $A$ be a differential graded algebra over a field.
There is an equivalence of categories between minimal $A_n$-modules over $H_*(A)$
and $(n-1)$-Postnikov systems based on the bar resolution\footnote{See Definition \ref{postnikovdef}.}.
\end{thm}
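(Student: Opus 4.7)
The plan is to construct the equivalence explicitly via a truncated two-sided bar construction. Given a minimal $A_n$-module $X$ over $R = H_*(A)$, with structure maps $m_i^X \colon R^{\otimes (i-1)} \otimes X \to X$ for $2 \le i \le n$, I would define the functor $\Phi$ by setting
\[ \Phi(X) \;=\; \bigoplus_{k=0}^{n-1} A \otimes R^{\otimes k} \otimes X, \]
equipped with the differential assembled from $d_A$, the chosen $A_\infty$-operations $m_i^R$ on $R$, and the $m_i^X$, in the standard bar-complex pattern. The $A_n$-relations on $X$ together with the $A_\infty$-relations on $R$ are precisely what force $d^2 = 0$. Filtering by tensor length yields an increasing filtration whose subquotients are the free DG $A$-modules $A \otimes R^{\otimes k} \otimes X$ concentrated in a single filtration degree; this is exactly the data of an $(n-1)$-Postnikov system based on the first $n$ terms of the bar resolution of $X$.

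For the morphism correspondence, an $A_n$-module map $f \colon X \to Y$ has components $f_i \colon R^{\otimes (i-1)} \otimes X \to Y$ for $1 \le i \le n$, satisfying higher-coherence relations; these assemble into a filtered DG $A$-linear map $\Phi(f)$ by the same combinatorial pattern that produced the differential. Faithfulness is then essentially formal, since the $f_i$ can be recovered from the off-diagonal components of $\Phi(f)$ relative to the tensor-length decomposition. Fullness amounts to showing that any morphism of Postnikov systems can be rigidified to a filtered DG map between truncated bar complexes; this is a standard Postnikov obstruction argument, and it succeeds because the filtration quotients are free $A$-modules so that the relevant mapping complexes are sufficiently acyclic.

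The main obstacle, where I expect the bulk of the work to lie, is essential surjectivity. Starting from an arbitrary $(n-1)$-Postnikov system on the bar resolution of $X$, one must extract honest $A_n$-operations on $X$. I would argue by induction: the $k$-invariant of the $k$-th Postnikov stage, read through the bar resolution, yields a cocycle whose class determines $m_{k+1}^X$ up to chain homotopy, and successive choices must be coordinated so that the final collection $m_2^X, \ldots, m_n^X$ satisfies the $A_n$-relations strictly and the construction is functorial up to natural isomorphism. Freeness of the bar resolution terms, together with the contractibility of the relevant function complexes, should make consistent choices possible, but the proof will require careful bookkeeping of signs and of the ambiguity in contracting homotopies to verify that the inverse functor is well defined up to natural equivalence — and this is what pins the Postnikov picture to the $A_n$-module picture on the nose.
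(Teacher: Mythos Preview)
Your overall strategy---build the truncated bar complex, filter by tensor length to obtain the Postnikov tower, and prove equivalence by matching extensions inductively---is exactly the paper's approach. Two points deserve attention.

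First, there is a missing ingredient in your differential. You list $d_A$, the $A_\infty$-operations $m_i^R$, and the $m_i^X$, but you also need the $A_\infty$-\emph{module} structure on $A$ over $R=H_*(A)$: operations $m_j^A\colon R^{\otimes(j-1)}\otimes A \to A$ (equivalently $\mu\circ(f_{j-1}\otimes 1)$ coming from the fixed quasi-isomorphism $f\colon H_*(A)\to A$). Without these, the bar differential does not square to zero; the relation $\partial^2=0$ uses the $A_\infty$-module relations for $A$ on equal footing with those for $X$ and the $A_\infty$-algebra relations on $R$. This is not a deep obstacle, but as written your construction would fail at this step.

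Second, your essential surjectivity sketch (``extract $m_{k+1}^X$ from the $k$-invariant, then coordinate successive choices'') is vaguer than what the paper does and risks becoming circular. The paper's organizing lemma is a direct bijection: given an $A_{n+1}$-structure on $X$, the set of $A_{n+2}$-structures extending it is in bijection with the set of homotopy lifts $R_{n+1}\to Y_n$ of $D\colon R_{n+1}\to R_n$ in the derived category. The obstruction side of this (the class in $\Ext^{n,n-2}$) matches the $A_n$-obstruction, and the torsor of choices matches the torsor of lifts; essential surjectivity is then a one-line induction. Framing it this way avoids any separate ``coordination'' step---the bijection already guarantees that the extracted $m_{k+1}^X$ satisfies the next relation. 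Your fullness/faithfulness sketch is fine and matches the paper's inductive argument on morphisms.
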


This is proved below as Corollary \ref{mainequiv} and Theorem \ref{mainequiv2}. It implies
in particular that for a fixed $H_*(A)$-module $X$, the moduli groupoid of $A_n$-structures on
$X$ is equivalent to the groupoid of $(n-1)$-Postnikov systems based on the bar resolution
for $X$ with isomorphisms which are the identity on the bar resolution (see Corollary \ref{mainequiv3}). These equivalences of categories are compatible with the forgetful functors for varying $n$ and hence yield an equivalence of the two obstruction theories (see Theorem \ref{bijectivecorr}).

It is a folk theorem in homotopy theory that given a chain complex $B_\bullet$ in a homotopy category $\Ho(\cC)$, $n$-Postnikov systems based on $B_\bullet$ correspond to rigidifications of the first $n$-terms of $B_\bullet$ to a chain complex in $\cC$. We explain this in Appendix \ref{appen}.
A consequence of this is the following result which is a direct consequence of Corollary \ref{mainequiv} and Proposition \ref{rigidifications}.
\begin{cor}
Isomorphism classes of $A_n$-module structures on $X$ are in bijective correspondence with 
weak equivalence classes of rigidifications of the complex formed by the first $n$ terms of the bar resolution for $X$.
\end{cor}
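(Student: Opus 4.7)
The plan is to compose the two results the authors cite, rather than redo anything substantial. Corollary \ref{mainequiv} (a repackaging of Theorem \ref{main}) supplies an equivalence of categories between minimal $A_n$-module structures over $H_*(A)$ and $(n-1)$-Postnikov systems based on the bar resolution. To extract isomorphism classes of $A_n$-module structures on a fixed $H_*(A)$-module $X$, I would first pass to the full subgroupoid of the left-hand side consisting of objects whose underlying $H_*(A)$-module is $X$ and whose morphisms are the identity on $X$. Under the equivalence of categories this corresponds to the subgroupoid of $(n-1)$-Postnikov systems based on the bar resolution of $X$ whose morphisms are the identity on the bar resolution; this is essentially the content of Corollary \ref{mainequiv3}. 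Taking $\pi_0$ of each side yields a bijection between isomorphism classes.

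The second step is to apply Proposition \ref{rigidifications} (proved in Appendix \ref{appen}), the folk statement that $n$-Postnikov systems in $\Ho(\cC)$ based on a chain complex $B_\bullet$ are classified by weak equivalence classes of rigidifications of the first $n$ terms of $B_\bullet$ to an honest chain complex in $\cC$. Specialising $\cC$ to the category of DG $A$-modules and $B_\bullet$ to the bar resolution of $X$, and using the index convention that an $(n-1)$-Postnikov system matches rigidifications of $n$ consecutive terms, this converts isomorphism classes of $(n-1)$-Postnikov systems based on the bar resolution into weak equivalence classes of rigidifications of the first $n$ terms of the bar resolution. Composing with the bijection from the previous paragraph gives the corollary.

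The only real bookkeeping is to check that the notion of "isomorphism" on the Postnikov side (identity on the bar resolution, quasi-isomorphisms elsewhere) lines up with the notion of "weak equivalence of rigidifications" used in Proposition \ref{rigidifications}; both are defined in terms of quasi-isomorphisms of the underlying DG $A$-modules, so this is immediate from unpacking definitions. There is no genuinely new homotopical content in this corollary beyond the two inputs, and the shift between "$(n-1)$-Postnikov system" and "first $n$ terms" is already built into the statements of Corollary \ref{mainequiv} and Proposition \ref{rigidifications} so that the composite lands on the desired indexing, so the main obstacle—matching index conventions—has in effect been handled upstream.
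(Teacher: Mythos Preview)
Your proposal is correct and follows essentially the same approach as the paper: the authors state explicitly that the corollary is a direct consequence of Corollary~\ref{mainequiv} and Proposition~\ref{rigidifications}, and that is precisely the composition you carry out. Your additional invocation of Corollary~\ref{mainequiv3} to pin down the ``identity on the bar resolution'' condition is a reasonable bit of bookkeeping and is consistent with how the paper packages the equivalence.
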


It would be interesting to know to what extent the previous results generalize to an abstract
stable homotopy theoretic context.

The equivalence of Theorem \ref{main} is reminiscent of the equivalence originally established by Stasheff \cite{St} between the existence of an $A_n$-structure (defined as a certain diagram in the homotopy category) and an $A_n$-form on a topological space $X$. It would be interesting to 
understand the precise relation.

A related problem is to give a similar characterization of $A_n$-algebra structures on a graded
algebra $R$. For such an $A_n$-structure, the obstruction to extending the underlying
$A_{n-1}$-structure to an $A_{n+1}$-structure is a class in the Hochschild cohomology group
$HH^{n+1,n-1}(R)$ (see Proposition \ref{hochschildalgebra}). It would be nice to have a description of these obstructions in terms of rigidification of diagrams.

\subsection{Conventions and Notation}

$k$ denotes a ground field. We always deal with (not necessarily bounded) homological complexes of $k$-vector spaces (unlike \cite{Ke} and \cite{BKS} who consider cohomological complexes).

We use the sign conventions of \cite{Ke} so that if $f,g$ are maps of graded modules, then
\[ (f\otimes g)(x\otimes y) = (-1)^{|f||x|} f(x)\otimes g(y). \]
In particular, this implies the commutation rule
\[ (f\otimes g)\circ (h \otimes j) = (-1)^{|g||h|} (f \circ h)\otimes(g\circ j). \]

If $C$ is a graded module we write $C[n]$ for the $n$-fold desuspension of $C$, i.e.
\[ C[n]_k = C_{k+n}. \]
If $C$ is a chain complex with differential $d \colon C[1] \to C$, then $C[n]$ is also a chain
complex with differential given by $(-1)^n d$.

If $f \colon C \to D$ is a map of chain complexes, the standard model for the 
\emph{homotopy fiber} of $f$ is the map $F\xrightarrow{\pi} C$ where $F$ is the 
complex (the desuspension of the mapping cone) defined by 
\[ F_k = D_{k+1} \oplus C_k \]
with differential given by the matrix 
\[  \begin{bmatrix} -d & f \\ 0 & d \end{bmatrix}, \]
and $\pi$ is the projection onto the second summand.

We will write $\lf x \rf$ for the greatest integer less than or equal to $x$.

Our differential graded algebras and modules are all unital.

\subsection{Organization of the paper}
In Section 2 we review the definition of $A_n$-algebra and module structures. In Section 3
we explain the obstruction to extending an $A_n$-module or algebra structure to an $A_{n+1}$-structure. In Section 4 we define the bar construction for an $A_n$-module. In Section 5
we recall the definition of Postnikov system and use the bar construction of the previous
section to produce an equivalence of categories between $A_n$-modules over $H_*(A)$ and 
Postnikov systems based on the bar construction. There is one appendix where we explain the 
relation between Postnikov systems and rigidifying complexes in the homotopy category.

\subsection{Acknowledgements}
We would like to thank the referee for a careful reading of the paper and many helpful 
suggestions and corrections.

\section{$A_n$-structures}

In this section we recall some basic definitions and notation regarding $A_n$-structures
\cite{Ke,St} and point out some simplifications of the formulas that take place in our 
setting. 

\begin{defn}
For $1 \leq n \leq \infty$, an \emph{$A_n$-algebra structure} on a 
graded $k$-module $R$ consists of maps
$$m_k: R^{\otimes k} \ra R[k-2] \quad \quad 1\leq k \leq n$$
satisfying the following relations for $m\leq n$
\begin{equation}
\label{ainftyalgebra}
\sum_{r+s+t=m} (-1)^{r+st}  m_{r+t+1} \circ (1^{\otimes r} \otimes m_{s} \otimes 1^{\otimes t})=0.
\end{equation}
An $A_n$-algebra is said to be \emph{minimal} if $m_1=0$.
\end{defn}

\begin{defn}
\label{defnAnmodule}
Let $R$ be an $A_n$-algebra. A \emph{right $A_l$-module $X$ over $R$} with $l\leq n$ consists of 
a graded $k$-module $X$, together with maps
$$m_k^X: X \otimes R^{\otimes k-1} \ra X[k-2]  \quad \quad 1\leq k\leq l$$
satisfying, for each $m\leq l$,
\begin{equation}
\label{ainftymodule}
\sum_{r+s+t=m} (-1)^{r+st} m_{r+t+1}^X(1^{\otimes r}\otimes m_s \otimes 1^{\otimes t})=0 
\end{equation}
where $m_s$ denotes $m_s^X$ whenever $r=0$ and $m_s^R$ otherwise. 

A \emph{left $A_l$-module $X$ over $R$} consists of maps $m_k^X \colon R^{\otimes k-1} \otimes
X \to X[k-2]$ satisfying \eqref{ainftymodule} with $m_s$ now denoting $m_s^X$ whenever $t=0$ and $m_s^R$ otherwise.

An $A_l$-module is said to be \emph{minimal} if $m_1^X = 0$.
\end{defn}
Note that $m_1m_1=0$ so that an $A_l$-module is a complex.

\begin{defn}
Let $R$ be an $A_n$-algebra and $l\leq n$.
A \emph{morphism of $A_l$-modules} over $R$, $f\colon X \ra Y$, consists of maps of $k$-modules 
\[ X\otimes R^{\otimes k-1} \llra{f_k} Y[k-1] \quad \quad 1\leq k\leq l \] 
satisfying the following equation in $\Hom(X\otimes R^{\otimes m-1},Y[m-2])$
for each $m\leq l$:
\begin{equation}
\label{ainftymor}
\sum_{r+s+t=m} (-1)^{r+st}f_{r+t+1}(1^{\otimes r}\otimes m_s \otimes 1^{\otimes t}) 
	= \sum_{i+j=m} (-1)^{(i+1)j} m_{j+1} (f_i\otimes 1^{\otimes j}) 
\end{equation}
where again $m_s$ denotes $m_s^X$ whenever $r=0$ and $m_s^R$ otherwise.

A morphism is called a \emph{quasi-isomorphism} if $f_1$ is a 
quasi-isomorphism. 
\end{defn}

\begin{defn}
A \emph{morphism of $A_n$-algebras} $f\colon A \ra B$ consists of maps 
\[ A^{\otimes k} \llra{f_k} B[k-1] \quad \quad k\leq n\] 
satisfying the following equation in $\Hom(A^{\otimes m},B[m-2])$ for each $1\leq m\leq n$:
\begin{equation}
\label{ainftymap}
\sum_{r+s+t=m} (-1)^{r+st}f_{r+t+1}(1^{\otimes r}\otimes m_s \otimes 1^{\otimes t}) = 
\sum (-1)^v m_u (f_{i_1}\otimes \ldots \otimes f_{i_u})
\end{equation}
where, on the right hand side, the sum is over all decompositions
$i_1+i_2+ \dots + i_u =m$ and $v=(u-1)(i_1 -1) +(u-2)(i_2 -1)+ \ldots + 2(i_{u-2}-1)
+ (i_{u-1} -1)$.

A morphism is called a \emph{quasi-isomorphism} if $f_1$ is a 
quasi-isomorphism.
\end{defn}

The $A_\infty$-algebras we will be dealing with arise from the following result.
\begin{thm}[Kadeishvili]
\label{Kadeishvili}
Let $A$ be a differential graded algebra over a field $k$. There is an $A_\infty$ structure on 
$H_*(A)$ together with a quasi-isomorphism of $A_\infty$-algebras
\[ f\colon H_*(A) \to A. \]
Moreover $m_1^{H_*(A)}=0$ and $m_2^{H_*(A)}$ is the associative multiplication induced
by the multiplication on $A$.
\end{thm}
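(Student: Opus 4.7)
The plan is to construct both the $A_\infty$-structure on $H_*(A)$ and the morphism $f$ simultaneously by induction on $n$, using a chain-level deformation retraction of $A$ onto $H_*(A)$ obtained from the field hypothesis. First I would fix direct-sum splittings $A = B_*(A) \oplus V \oplus W$, where $V \oplus B_*(A) = Z_*(A)$ is the cycles and $d$ restricts to an isomorphism $W \xrightarrow{\cong} B_*(A)$. Identifying $V$ with $H_*(A)$, set $f_1 \colon H_*(A) \hookrightarrow A$ to be the inclusion, $p \colon A \twoheadrightarrow H_*(A)$ the projection onto $V$, and $h \colon A \to A[1]$ the map that is $d^{-1}$ on $B_*(A)$ and zero on $V \oplus W$. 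Then $f_1$ and $p$ are chain maps, $p f_1 = \id$, and $dh + hd = \id - f_1 p$, so we have a strong deformation retract.

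Next I would set $m_1^{H_*(A)} = 0$ and $m_2^{H_*(A)}(x,y) = p\bigl(f_1(x) \cdot f_1(y)\bigr)$, which is the associative multiplication induced on $H_*(A)$, and define $f_2 \colon H_*(A)^{\otimes 2} \to A[1]$ by $f_2(x,y) = h\bigl(f_1(x) \cdot f_1(y) - f_1 m_2^{H_*(A)}(x,y)\bigr)$. A direct check verifies the $A_\infty$-algebra relation at level $2$ on $H_*(A)$ (which is automatic since $m_2^{H_*(A)}$ is associative and $m_1^{H_*(A)}=0$) and the morphism relation \eqref{ainftymap} at $m=2$.

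For the inductive step, suppose $m_k^{H_*(A)}$ and $f_k$ have been defined for $k < n$ so that the relations \eqref{ainftyalgebra} and \eqref{ainftymap} hold up to order $n-1$. Denote by $E_n \colon H_*(A)^{\otimes n} \to A[n-2]$ the difference of the two sides of \eqref{ainftymap} at level $n$ with the yet-to-be-defined terms $f_n$ and $f_1 m_n^{H_*(A)}$ omitted; that is, $E_n$ collects all the lower-order terms. The key point is that the inductive relations, together with $d = m_1^A$ and the Leibniz-type identities satisfied by the $f_k$ and $m_k$, force $d \circ E_n = 0$, so $E_n$ takes values in cycles of $A$. Then one sets
\[
m_n^{H_*(A)} = p \circ E_n, \qquad f_n = h \circ \bigl(E_n - f_1 \circ m_n^{H_*(A)}\bigr).
\]
Since $E_n - f_1 m_n^{H_*(A)} = E_n - f_1 p E_n = (dh+hd) E_n = d h E_n$ lies in boundaries, the morphism relation \eqref{ainftymap} at level $n$ is satisfied by construction. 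A parallel computation, using that $p$ annihilates boundaries and $h$ annihilates cycles representing classes in $V$, shows that the resulting $m_n^{H_*(A)}$ together with the $m_k^{H_*(A)}$ for $k<n$ satisfies \eqref{ainftyalgebra} at level $n$.

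The main obstacle is the verification that $E_n$ is a cycle at each inductive step: this is the heart of the argument and requires a careful rearrangement of the double sums coming from applying $d$ to each term in $E_n$, using the lower-order relations \eqref{ainftyalgebra} for $m_k^{H_*(A)}$ and \eqref{ainftymap} for $f_k$ (both inductively available), and reconciling the signs $(-1)^{r+st}$ and $(-1)^v$ dictated by the Koszul conventions fixed at the start of the paper. Once this bookkeeping is carried out and the compatibility of $h$, $p$, and the splitting with the differential is invoked, $f_1$ is a quasi-isomorphism by construction and the induction produces an $A_\infty$-algebra $(H_*(A), m_*^{H_*(A)})$ together with an $A_\infty$-quasi-isomorphism $f \colon H_*(A) \to A$ with the prescribed $m_1 = 0$ and $m_2$ the induced product.
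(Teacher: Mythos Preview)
Your proposal is correct and follows essentially the same inductive construction that the paper sketches immediately after the statement (the paper does not give a full proof, citing \cite{Ka} instead): one starts from a splitting $f_1$ of $Z(A)\to H_*(A)$ and builds $m_n$ and $f_n$ simultaneously using the simplified morphism relation of Lemma~\ref{simpmorph}. Your explicit use of the contracting homotopy $h$ is the standard way to package this induction; note that your formula $f_n = h(E_n - f_1 m_n)$ reduces to $f_n = hE_n$ because your chosen $h$ vanishes on $V=\im f_1$, and then $df_n = dhE_n = E_n - f_1pE_n = E_n - f_1 m_n$ gives exactly the required relation.
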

The $A_\infty$-structure on $H_*(A)$ and the quasi-isomorphism $f$ are constructed 
inductively. The induction is started by picking a splitting $f_1$ for the projection 
$Z(A) \to H_*(A)$ and then making use of the following simplification of \eqref{ainftymap}.

\begin{lemma} 
\label{simpmorph}
Let $B$ be a minimal $A_\infty$-algebra and $A$ a differential graded
algebra. Then an $A_\infty$-morphism $f\colon B \to A$ consists of maps $f_n\colon B \to A[n-1]$
satisfying the following formulas for all $n$:
\begin{eqnarray*}
m_1 f_n & = & f_1(m_n) + \sum_{0<u+t<n-1} (-1)^{u +t(n-u-t)} f_{u+t+1}(1^{\otimes u} \otimes m_{n-u-t} \otimes 1^{\otimes t})+ \\ 
& + & \sum_{i=1}^{n-1} (-1)^i m_2(f_i \otimes f_{n-i})
\end{eqnarray*}
\end{lemma}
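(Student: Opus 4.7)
The plan is to derive the formula by specializing the general $A_\infty$-morphism relation \eqref{ainftymap} to the hypothesis that $B$ is minimal (so $m_1^B=0$) and that $A$ is a DGA (so $m_k^A = 0$ for all $k\geq 3$, with $m_1^A$ the differential and $m_2^A$ the associative multiplication), then rearranging and reading off the signs.

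First, I take \eqref{ainftymap} with $m=n$. On the left hand side, the sum runs over triples $r+s+t=n$ with $s\geq 1$, but since $m_1^B=0$ only terms with $s\geq 2$ survive. I isolate the term $r=t=0$, $s=n$, which contributes $(-1)^0 f_1(m_n) = f_1(m_n)$. The remaining terms have $r+t\geq 1$ and $s=n-r-t\geq 2$, equivalently $0<r+t<n-1$. Relabeling $r$ as $u$, these contribute precisely
\[
\sum_{0<u+t<n-1} (-1)^{u+t(n-u-t)}\, f_{u+t+1}\bigl(1^{\otimes u}\otimes m_{n-u-t}\otimes 1^{\otimes t}\bigr),
\]
which is the middle sum in the statement.

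Next, on the right hand side of \eqref{ainftymap}, the index $u$ ranges over decompositions $i_1+\cdots+i_u = n$, but since only $m_1^A$ and $m_2^A$ are nonzero, only $u=1$ and $u=2$ contribute. For $u=1$ the sign exponent $v$ is empty (and hence $0$) and the term is $m_1 f_n$. For $u=2$ the exponent is $v=(2-1)(i_1-1) = i_1-1$, yielding
\[
\sum_{i=1}^{n-1} (-1)^{i-1}\, m_2\bigl(f_i\otimes f_{n-i}\bigr).
\]
Substituting back into \eqref{ainftymap} gives
\[
f_1(m_n) + \sum_{0<u+t<n-1} (-1)^{u+t(n-u-t)} f_{u+t+1}\bigl(1^{\otimes u}\otimes m_{n-u-t}\otimes 1^{\otimes t}\bigr)
= m_1 f_n + \sum_{i=1}^{n-1} (-1)^{i-1} m_2(f_i\otimes f_{n-i}).
\]
Solving for $m_1 f_n$ and using $-(-1)^{i-1}=(-1)^i$ to flip the sign on the last sum yields exactly the displayed formula.

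The argument is purely formal unpacking; the only possible obstacle is sign bookkeeping, which reduces to matching $(-1)^{r+st}$ with $r=u$, $s=n-u-t$, $t=t$ to obtain $(-1)^{u+t(n-u-t)}$, and converting $(-1)^{i_1-1}$ into $(-1)^i$ after transposition across the equality.
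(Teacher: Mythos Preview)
Your argument is correct and is precisely the intended derivation: the paper states this lemma without proof, treating it as an immediate specialization of \eqref{ainftymap} under the hypotheses $m_1^B=0$ and $m_k^A=0$ for $k\geq 3$. Your sign bookkeeping (in particular the identification $(-1)^{r+st}=(-1)^{u+t(n-u-t)}$ and the computation $v=i_1-1$ for $u=2$) is accurate, so there is nothing to add.
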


\begin{remark} \label{module-on-A}
In the situation of Theorem \ref{Kadeishvili}, writing $d$ for the differential and 
$\mu$ for the multiplication on $A$, the maps 
\[ m_n^A = (-1)^n \mu \circ(f_{n-1}\otimes 1) \colon H_*(A)^{\otimes n-1}\otimes A \to A[n-1]  \quad \text{ for } n\geq 2 \]
together with $m_1=d$ make $A$ a left $A_\infty$-module over $H_*(A)$.
\end{remark}

Given a differential graded algebra $A$, {\bf we fix an $A_\infty$-algebra structure on $H_*(A)$
and a quasi-isomorphism $f\colon H_*(A) \to A$} given by Theorem \ref{Kadeishvili} for the rest
of the paper.

A graded module $X$ over the associative graded algebra $H_*(A)$ has a natural $A_2$-structure
with $m_1^X=0$ and $m_2^X$ given by the action of $H_*(A)$. Moreover, an arbitrary map  
$m_3^X \colon X \otimes H_*(A)^{\otimes 2} \to X[1]$ gives $X$ the structure of an $A_3$-module
over $H_*(A)$.

The formulas \eqref{ainftymodule} defining an $A_n$-module simplify for modules
over $H_*(A)$ whose underlying $A_2$-structure arises from the situation described in the
previous paragraph. 
For example, an $A_2$-module structure consists of a map $m_2^X \colon X \otimes H_*(A)
\to X$ satisfying no hypothesis and an $A_3$-structure consists of an associative 
action $m_2^X$ together with a map $m_3^X \colon X \otimes H_*(A)^{\otimes 2} \to X[1]$
satisfying no hypothesis. More generally, an $A_n$-structure puts no restriction on the 
map $m_n^X$. We state here the simplified formulas here as they will be used repeatedly.

\begin{lemma}
\label{simplifmod}
Let $X$ be a graded vector space. An $A_n$-module structure on $X$ over $H_*(A)$ 
with $m_1^X=0$ consists of maps 
\[ m_k^X \colon X \otimes H_*(A)^{\otimes (k-1)} \to X[k-2]  \quad \quad 2\leq k \leq n\]
satisfying the following equations for $2\leq k<n$:
\[
\sum_{\stackrel{2\leq r+t+1 \leq k}{r+s+t=k+1}} 
(-1)^{r+st+1}  m_{r+t+1}^X \circ (1^r \otimes m_s \otimes 1^t)=0.
\]
\end{lemma}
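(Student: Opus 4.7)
The plan is to derive the simplified equations by discarding, in the full $A_\infty$-module relation \eqref{ainftymodule}, every term that automatically vanishes under the hypotheses. Two vanishing facts are in play: the given $m_1^X = 0$, and $m_1^{H_*(A)} = 0$, the latter being part of the minimality in Theorem \ref{Kadeishvili}. Together they ensure that any occurrence of $m_1$ in \eqref{ainftymodule} is zero, regardless of whether the convention of Definition \ref{defnAnmodule} refers the symbol to the module or to the algebra structure.

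I would then inspect the sum over triples $(r,s,t)$ with $r+s+t = m$ and identify the two sources of vanishing: terms with $s = 1$ (whose inner factor is $m_1$), and terms with $r = t = 0$ (whose outer factor is $m_1^X$). For $m \leq 2$ every triple falls into one of these cases, so the corresponding relations are vacuous. For $m = k+1$ with $2 \leq k \leq n-1$, the surviving triples are exactly those with $s \geq 2$ and $r+t+1 \geq 2$; writing $s = k+1-(r+t)$ translates $s \geq 2$ into $r+t+1 \leq k$, yielding the lemma's index set, while $r+t+1 \geq 2$ is its lower bound. Multiplying the resulting equation through by $-1$ converts the sign $(-1)^{r+st}$ of \eqref{ainftymodule} into $(-1)^{r+st+1}$, matching the statement.

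The argument is pure bookkeeping, so the only obstacle is clerical — chiefly making sure no non-trivial term is inadvertently discarded alongside the $m_1$-terms, and checking that the case $k = n$ is correctly absent from the list of relations (the full $A_n$-relation at $m = n+1$ is not imposed, so no constraint on $m_n^X$ arises, consistent with the discussion preceding the lemma).
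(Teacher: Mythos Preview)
Your proposal is correct and is exactly the natural argument; the paper itself states this lemma without proof, treating it as a routine simplification of \eqref{ainftymodule} under the minimality assumptions. Your bookkeeping --- discarding the $s=1$ terms (since $m_1^{H_*(A)}=0$ and $m_1^X=0$) and the $r=t=0$ terms (since the outer $m_1^X=0$), then reindexing via $m=k+1$ --- is precisely what is intended.
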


\section{The obstruction to extending an $A_n$-structure to an $A_{n+1}$-structure}

Let $R$ be a minimal $A_\infty$-algebra over the field $k$ (so $m_2$ makes $R$ an associative algebra)
and let $X$ be a minimal right $A_n$-module over $R$ (so if $n\geq 3$, $X$ is in particular a module over the associative algebra $R$ in the usual sense). In this section we describe the 
set of $A_{n+1}$-structures on $X$ extending the given $A_n$-structure. We show that the
obstruction to the existence of an $A_{n+1}$-structure extending the underlying $A_{n-1}$-structure is an element in $\Ext^{n,n-2}_R(X,X)$.

The exact same computation shows that if $S$ is a graded algebra and one considers $A_n$-algebra
structures on $S$ extending the underlying $A_2$-structure then the obstructions to extension
are classes in the Hochschild cohomology $HH^{n,n-2}(S)$.

This obstruction theory for minimal $A_n$-algebras is also described in \cite[Appendix B.4]{Le}. Lef\`evre
also discusses obstruction theory for non-minimal $A_n$-algebras and modules in \cite[Appendix B]{Le} but
this does not seem relevant to the minimal case we consider here.

\bigskip

Recall the bar resolution of a module $M$ over a graded algebra $R$ (see for instance \cite[8.6.12]{We}):
\[ 
\cdots \to M\otimes_k R\otimes_k R \to M \otimes_k R \to M.
\]
This is a free resolution of $M$ as a right $R$-module. If $N$ is another right $R$-module we 
write
\[ \left( \Baar^{\star,*}(M,N), \partial\right)= \Hom^*( M\otimes R^{\otimes(\star+1)}, N) \]
for the induced cochain complex of graded $k$-modules. The cohomology of this complex
is $\Ext^{\star,*}_R(M,N)$.

If $R$ is a graded $k$-algebra, $M$ a graded $k$-module, $N$ a right $R$-module, and 
$f: M \to N$ a map of graded $k$-modules, we write
\[  f*1 \colon M \otimes R \to N \]
for the canonical extension of $f$ to a map of $R$-modules. 

Given $R$ and $X$ satisfying our standing assumptions, the map
\[ m_n^X \colon X \otimes R^{\otimes(n-1)} \to X[n-2] \]
yields a map
\[ m_n^X*1 \in \Baar^{n-1,n-2}(X,X). \] 

In this section we will write 
\begin{equation}
\label{phieq}
\phi_n = -m_2^X(1\otimes m_n) + \sum_{\stackrel{2< r+t+1<n}{r+s+t=n+1}} (-1)^{r+st} 
m_{r+t+1}^X(1^{\otimes r}\otimes m_s \otimes 1^{\otimes t}).
\end{equation}
Note that the condition $\phi_n=0$ is precisely the $(n+1)$-st condition 
for $(m_2^X,\ldots,m_{n-1}^X,0,0)$ to be an $A_{n+1}$-structure on $X$ (see 
Definition \ref{defnAnmodule}).

\begin{lemma}
\label{lemmahoch}
Let $\partial$ be the coboundary operator in the complex $\Baar^{\star,*}(X,X)$. 
There is an $A_{n+1}$-structure on $X$ extending a given $A_n$-structure if and
only if the following equation holds:
\begin{equation}
\label{mnboundaryalt}
\partial(m_n^X*1) + \phi_n*1 =0.
\end{equation}
Furthermore, when an extension exists, the set of extensions is in 1-1 correspondence
with the set of $k$-module maps $m_{n+1}^X \colon X\otimes R^{\otimes n} \to X[n-1]$.
\end{lemma}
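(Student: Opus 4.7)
The plan is to reduce ``extending to $A_{n+1}$'' to a single new equation---the level-$(n{+}1)$ instance of \eqref{ainftymodule}---then observe that this equation does not involve $m_{n+1}^X$ in the minimal setting, and finally identify it with the stated bar-coboundary condition by direct unpacking.

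First I will note that, since $m_2^X,\ldots,m_n^X$ are unchanged, the relations \eqref{ainftymodule} for $m\leq n$ continue to hold, so the only new condition is the $m=n+1$ relation. A potential occurrence of $m_{n+1}^X$ in this equation must be either as the outer $m_{r+t+1}^X$ (which forces $s=1$) or as the inner $m_s^X$ with $s=n+1$ (which forces $r=t=0$, so the outer is $m_1^X$); in both cases $m_1=0$ kills the summand, so $m_{n+1}^X$ drops out entirely. This proves the parametrization claim, reducing the first assertion to identifying the surviving equation with \eqref{mnboundaryalt}.

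Next, after killing the $m_1$-terms, I will split the $m=n+1$ relation by the value of $r+t+1$ into three ranges: $r+t+1=2$ (yielding the two summands $(-1)^n m_2^X(m_n^X\otimes 1)$ and $-m_2^X(1\otimes m_n^R)$); $3\leq r+t+1\leq n-1$ (yielding exactly the sum in \eqref{phieq}); and $r+t+1=n$ (yielding $(-1)^r m_n^X(1^{\otimes r}\otimes m_2\otimes 1^{\otimes n-1-r})$ for $r=0,\ldots,n-1$). Combining $-m_2^X(1\otimes m_n^R)$ with the middle range reproduces $\phi_n$, while the remaining terms form
\[ A := (-1)^n m_2^X(m_n^X\otimes 1)+\sum_{r=0}^{n-1}(-1)^r m_n^X(1^{\otimes r}\otimes m_2\otimes 1^{\otimes n-1-r}). \]
Unwinding the bar coboundary---whose faces are the action $x\otimes r_1\mapsto xr_1$ on the first slot together with the interior multiplications $r_i r_{i+1}$---and using that $m_n^X*1$ is right $R$-linear in its last slot, the key identity I will verify is $A*1 = \partial(m_n^X*1)$ in $\Baar^{n,n-2}(X,X)$.

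To finish, I will use that both $\partial(m_n^X*1)$ and $\phi_n*1$ are right $R$-linear in the $(n{+}1)$-st tensor factor, so \eqref{mnboundaryalt} holds on $X\otimes R^{\otimes n+1}$ if and only if its specialization at $r_{n+1}=1$ does, and by the previous step this specialization is precisely $A+\phi_n=0$, i.e.\ the $m=n+1$ instance of \eqref{ainftymodule}. The main bookkeeping burden will be matching the Koszul signs of \eqref{ainftymodule} against those of the Hochschild coboundary, but these are designed to agree term by term.
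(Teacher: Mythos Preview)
Your proposal is correct and follows essentially the same route as the paper's own proof: both reduce to verifying the single identity $\partial(m_n^X*1) = A*1$ (your $A$ is exactly the right-hand side of the paper's equation \eqref{eq}), after which the equivalence with the level-$(n{+}1)$ relation and the parametrization of extensions by arbitrary $m_{n+1}^X$ are immediate from minimality. The paper simply writes out both sides of $\partial(m_n^X*1)=A*1$ on an element $(x,\zeta_1,\ldots,\zeta_{n+1})$ and compares term by term, which is what your ``direct unpacking'' will amount to.
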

\begin{proof}
By Lemma \ref{simplifmod} we need to check that 
\begin{equation}
\label{eq}
\partial(m_n^X*1) = (-1)^nm_2^X(m_n^X\otimes 1)*1 + \sum_{i=0}^{n-1} (-1)^i m_n^X(1^{\otimes i}\otimes m_2\otimes 1^{\otimes(n-i-1)})*1.
\end{equation}
The map,
\[
\partial(m_n^X*1) \colon X \otimes R^{\otimes (n+1)} \to X[n-2] 
\]
is given by the formula
\begin{eqnarray*}
\partial(m_n^X*1)(x,\zeta_1,\ldots,\zeta_{n+1}) = 
m_n^X(x\zeta_1,\ldots,\zeta_n)\zeta_{n+1} - m_n^X(x,\zeta_1\zeta_2,\ldots,\zeta_n)\zeta_{n+1} + \ldots \\
  +  (-1)^{n-1}m_n^X(x,\zeta_1,\ldots,\zeta_{n-1}\zeta_n)\zeta_{n+1} + 
(-1)^n m_n^X(x,\zeta_1,\ldots,\zeta_{n-1})\zeta_n\zeta_{n+1}.
\end{eqnarray*}
On the other hand, applying the right hand side of \eqref{eq} to $(x,\zeta_1,\ldots,\zeta_{n+1})$ we
get 
\begin{eqnarray*}
(-1)^nm_n^X(x,\zeta_1,\ldots,\zeta_{n-1})\zeta_n\zeta_{n+1} + m_n^X(x\zeta_1,\ldots,\zeta_n)\zeta_{n+1} + \\
+ \ldots + (-1)^{n-1} m_n^X(x,\zeta_1,\ldots,\zeta_{n-1}\zeta_n)\zeta_{n+1}.
\end{eqnarray*}
\end{proof}

\begin{lemma}
\label{lemmacocycle}
If $X$ is an $A_n$-module over $R$,  $\phi_n*1 \in \Baar^{n,n-2}(X,X)$ is a cocycle.
\end{lemma}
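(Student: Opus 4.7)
The plan is to verify $\partial(\phi_n * 1) = 0$ by a direct computation in the bar complex $\Baar^{\star,*}(X,X)$, using the $A_n$-relations of Lemma~\ref{simplifmod} for $X$ and the $A_\infty$-algebra relations on $R$ to cancel all resulting terms. First I would apply the explicit formula for the Hochschild differential that is made visible in the proof of Lemma~\ref{lemmahoch}: for any $f \colon X \otimes R^{\otimes k} \to X[*]$,
\[
\partial(f * 1) = (-1)^k m_2^X(f \otimes 1) * 1 + \sum_{i=0}^{k-1} (-1)^i \, f\bigl(1^{\otimes i} \otimes m_2 \otimes 1^{\otimes (k-1-i)}\bigr) * 1,
\]
where the $m_2$ in each summand denotes $m_2^X$ when $i=0$ and $m_2^R$ otherwise. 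Substituting the definition of $\phi_n$ then expresses $\partial(\phi_n * 1)$ as a signed sum of ``doubly nested'' compositions of the two shapes
\[
m_a^X \bigl(1^{\otimes u} \otimes m_b \otimes 1^{\otimes v} \otimes m_2 \otimes 1^{\otimes w}\bigr) * 1 \quad\text{or}\quad m_a^X \bigl(1^{\otimes u} \otimes m_b(1^{\otimes v} \otimes m_2 \otimes 1^{\otimes w}) \otimes 1^{\otimes z}\bigr) * 1,
\]
with $a \in \{2, \ldots, n-1\}$ and $b \in \{3, \ldots, n\}$.

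Next I would regroup these terms by the outer operation $m_a^X$ together with the relative positions occupied by the two inner operations $m_b$ and $m_2$. For each fixed choice of outer operation and of these positions, summing over the remaining free indices produces, inserted into a single slot of $m_a^X$, exactly the left-hand side of one of the quadratic identities of Lemma~\ref{simplifmod}: either an $A_m$-module relation for $X$ with $m \leq n$, or the $A_m$-algebra relation for $R$ (with $m \leq n$) inserted at a fixed tensor position. Each such identity vanishes by hypothesis, so the whole sum vanishes term by term.

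The main obstacle is the combinatorial and sign bookkeeping required to match every contribution to $\partial(\phi_n * 1)$ with the relation from Lemma~\ref{simplifmod} that cancels it; in particular one must check that the signs $(-1)^{r+st}$ inside $\phi_n$ combine with those produced by $\partial$ so as to reproduce the signs appearing in that lemma. A conceptually cleaner organization, which systematizes the signs, is to encode the given $A_n$-module structure on $X$ and the $A_\infty$-algebra structure on $R$ as a partial codifferential $b$ on the bar coalgebra $X \otimes \bigoplus_k R[1]^{\otimes k}$; the hypotheses say precisely that $b^2$ has vanishing corestriction in degrees $\leq n$, and since $b^2$ is automatically a coderivation its degree-$(n{+}1)$ corestriction is closed under the Hochschild differential induced by the degree-$1$ part of $b$. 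Combined with Lemma~\ref{lemmahoch}, this closure statement reads $\partial(\phi_n * 1) + \partial^2(m_n^X * 1) = 0$, and the second term vanishes by $\partial^2 = 0$.
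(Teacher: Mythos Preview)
Your first two paragraphs outline the same direct computation the paper carries out, but your description of the cancellation mechanism is a bit too optimistic. In the paper's argument the terms do \emph{not} regroup in one step into packets that are each an $A_m$-relation; rather one first applies the $A_{s+1}$-algebra relation to the ``nested'' terms where the inserted $m_2$ lands inside $m_s$, regroups the output with the ``side-by-side'' terms, then applies the $A_{r+t+2}$-module relation to the result, and finally observes separate pairwise cancellations among what is left. So the sign and combinatorics bookkeeping you flag as the obstacle is genuinely the content of the proof, and your paragraph~2 does not yet capture its shape.

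Your third paragraph, however, is a correct and genuinely different route. The leading nonvanishing corestriction of $b^2$ is the full $(n{+}1)$-st relation $\tilde\phi_n$ (involving $m_n^X$), and the identity $[b,b^2]=0$, filtered by tensor weight and using that lower-weight corestrictions of $b^2$ vanish, gives exactly that $\tilde\phi_n$ is a Hochschild cocycle. The computation in the proof of Lemma~\ref{lemmahoch} (equation~\eqref{eq}) then says $\tilde\phi_n*1-\phi_n*1=\partial(m_n^X*1)$, whence $\partial(\phi_n*1)=\partial(\tilde\phi_n*1)-\partial^2(m_n^X*1)=0$, as you wrote. One small point of phrasing: you are invoking the \emph{proof} of Lemma~\ref{lemmahoch} (specifically \eqref{eq}), not just its statement; and the quantity that the coalgebra argument directly controls is $\tilde\phi_n$, not $\phi_n$.

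Comparison: the paper's approach is elementary and self-contained but requires the multi-stage term-matching; your Maurer--Cartan argument hides the signs inside the coderivation formalism and reduces the lemma to $\partial^2=0$ plus the identity already established in Lemma~\ref{lemmahoch}. The paper effectively records the reverse implication in the Remark following the lemma (deducing that $\tilde\phi_n*1$ is closed from the lemma), so your argument can be read as running that remark backwards.
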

\begin{proof}
It suffices to check the condition $(\phi_n*1) \circ \partial =0$ on module generators. Thus we need to check that
\begin{equation}
\label{newphieq}
\sum_{i=0}^n (-1)^i \phi_n \left(1^{\otimes i} \otimes m_2 \otimes 1^{\otimes n-i} \right) + (-1)^{n+1}\phi_n*1 = 0.
\end{equation}
In the course of this proof, through \eqref{newcomb}, we omit 
\[ \sum_{\substack{r+s+t=n+1 \\ 2\leq r+t+1<n \\ (r,s,t)\neq(0,n,1)}} \]
which is understood to precede each sum.

Expanding the left summand in expression \eqref{newphieq} we obtain
\begin{eqnarray}
\label{term1}
\sum_{0\leq i<r} (-1)^{r+st+i} m_{r+t+1}^X(1^{\otimes i}\otimes m_2 \otimes 1^{\otimes r-i-1}\otimes m_s 
\otimes 1^{\otimes t}) + \\
\label{term2}
\sum_{r\leq i < r+s} (-1)^{r+st+i} m_{r+t+1}^X(1^{\otimes r} \otimes m_s(1^{\otimes i-r} \otimes m_2 \otimes 1^{\otimes r+s-i-1}) \otimes 1^{\otimes t})  + \\
\label{term3}
\sum_{r+s \leq i\leq n} (-1)^{r+st+i} m_{r+t+1}^X(1^{\otimes r}\otimes m_s \otimes 1^{\otimes i-r-s}\otimes m_2 \otimes 1^{\otimes r+t+s-i-1}) 
\end{eqnarray}
Using the $A_{s+1}$-structure on the middle term \eqref{term2} we find it is equal to
\begin{equation} 
\label{newmiddleterm}
\sum_{\substack{j+k+l=s+1 \\ 2\leq j+l+1<s}} (-1)^{st+j+kl+1} m_{r+t+1}^X (1^{\otimes r}\otimes m_{j+l+1} (1^{\otimes j} \otimes m_k \otimes 1^{\otimes l}) \otimes 1^{\otimes t}) 
\end{equation}
Separating out the term where $(j,k,l)=(1,s,0)$ and combining it with the term \eqref{term1} we obtain
\[ 
\sum_{0 \leq i\leq r} (-1)^{r+st+i} m_{r+t+1}^X(1^{\otimes i} \otimes m_2 \otimes 1^{\otimes r+t-i})
(1^{\otimes r+1} \otimes m_s \otimes 1^{\otimes t}).
\]
Similarly combining the term of \eqref{newmiddleterm}
where $(j,k,l)=(0,s,1)$ with the term \eqref{term3} we obtain
\[ 
\sum_{r \leq i\leq r+t} (-1)^{r+st+s+i-1} m_{r+t+1}^X(1^{\otimes i} \otimes m_2 \otimes 1^{\otimes r+t-i})
(1^{\otimes r} \otimes m_s \otimes 1^{\otimes t+1}).
\] 
Together the last two expressions yield
\begin{equation}
\label{combined}
\sum_{0 \leq i \leq r+t} (-1)^{r+st+s+i-1} m_{r+t+1}^X(1^{\otimes i} \otimes m_2 \otimes 1^{\otimes r+t-i})
(1^{\otimes r} \otimes m_s \otimes 1^{\otimes t+1}).
\end{equation}
Using the $(r+t+2)$-module structure in \eqref{combined} yields 
\begin{equation}
\label{newcomb}
\sum_{\substack{a+b+c=r+t+2 \\ b>2}} (-1)^{r+st+s+a+bc} m_{a+c+1}^X (1^{\otimes a}\otimes m_b 
\otimes 1^{\otimes c}) \circ (1^{\otimes r} \otimes m_s \otimes 1^{\otimes t+1}).
\end{equation}
The terms in \eqref{newcomb} corresponding to $(a,b,c)=(1,r+t+1,0)$ cancel with those terms in
 \eqref{newmiddleterm} that remain (i.e. $(j,k,l) \not \in \{ (1,s,0),(0,s,1)\}$) and satisfy
 $(r,s)=(1,n)$.
On the other hand, the terms in \eqref{newcomb} corresponding to $(a,b,c)=(0,r+t+1,1)$ 
cancel with the term $(-1)^{n+1} \phi_n*1$ in \eqref{newphieq}.

We are left with showing that the terms 
\begin{equation}
\label{nnewcomb}
\sum_{\substack{r+s+t=n+1\\ 2<s<n}} \sum_{\substack{a+b+c=r+t+2 \\ 2<b\leq r+t}} (-1)^{r+st+s+a+bc} m_{a+c+1}^X (1^{\otimes a} \otimes m_b \otimes 1^{\otimes c})
\circ (1^{\otimes r}\otimes m_s \otimes 1^{\otimes t+1}).
\end{equation}
left from \eqref{newcomb} and 
\begin{equation}
\label{nnewmiddleterm}
\sum_{\substack{r+s+t=n+1\\ 2<s<n}} \sum_{\substack{j+k+l=s+1\\ 2<k<s}} (-1)^{st+1+j+kl} m_{r+t+1}^X (1^{\otimes r} \otimes m_{j+l+1}(1^{\otimes j} \otimes m_k \otimes 1^{\otimes l}) \otimes 1^{\otimes t}).
\end{equation}
left from \eqref{newmiddleterm} add up to zero. In \eqref{nnewcomb}, the terms 
of the form $m_{a+c+1}^X(1^{\otimes a} \otimes m_b \otimes 1^{\otimes r-a-b} \otimes m_s \otimes 1^{\otimes t+1})$ appear twice with opposite signs. Finally, the
remaining terms of \eqref{newmiddleterm}, which are of the form $m_{a+c+1}^X (1^{\otimes a}\otimes m_b(1^{\otimes r-a} \otimes m_s \otimes 1^{\otimes t+1-c}) \otimes 1^{\otimes c})$, cancel with \eqref{nnewmiddleterm}.
\end{proof}

\begin{remark}
We have observed that, by definition, $(m_2^X,\ldots,m_{n-1}^X,0)$ extends to an $A_{n+1}$-structure if and only if $\phi_n=0$. Writing $\tilde\phi_n=0$ for the $A_{n+1}$-structure equation that $(m_2^X,\ldots,m_{n-1}^X,m_n^X)$ must satisfy in order to extend, it is easy to check that $(\tilde\phi_n-\phi_n)*1$ is a cocycle in the bar complex. Thus, given an $A_n$-structure on $X$, the previous lemma implies that $\tilde \phi_n*1$ is also a cocycle.
\end{remark}

We summarize the previous arguments in the following statement.
\begin{prop}
\label{hochschild}
Let $X$ be an $A_n$-module over $R$. 
\begin{enumerate}[(a)]
\item The underlying $A_{n-1}$-structure on $X$ can be extended to an $A_{n+1}$-structure iff the class 
$[\phi_n*1] \in  \Ext^{n,n-2}_R(X,X)$ vanishes.
\item  If $[\phi_n*1]=0$, the set of $A_{n+1}$-structures on $X$ extending the underlying $A_{n-1}$-structure is in bijective correspondence with pairs of $R$-module maps
\[ \psi \colon X\otimes R^{\otimes n} \to X[n-2], \quad \xi \colon X \otimes R^{\otimes (n+1)} \to X[n-1] \]
such that 
\[ \partial(\psi) = \phi_n*1 \]
\end{enumerate}
\end{prop}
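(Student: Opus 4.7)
My plan is to assemble the proposition directly from Lemmas \ref{lemmahoch} and \ref{lemmacocycle}, making use of the standard identification between $k$-linear maps $X \otimes R^{\otimes n-1} \to X[n-2]$ and elements of $\Baar^{n-1,n-2}(X,X)$ given by $m \mapsto m * 1$ (and similarly for $m_{n+1}^X$ in the next bar degree). This is the same device used in the statement of Lemma \ref{lemmahoch}, where the $A_{n+1}$-extension condition is rephrased in cochain-theoretic language.

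For part (a), I would first invoke Lemma \ref{lemmacocycle} to see that $\phi_n*1$ is a cocycle, so that the class $[\phi_n*1] \in \Ext^{n,n-2}_R(X,X)$ is defined. An extension of the underlying $A_{n-1}$-structure to an $A_{n+1}$-structure consists of new maps $\tilde m_n^X$ and $\tilde m_{n+1}^X$ satisfying the $A_{n+1}$-relations of Lemma \ref{simplifmod}. The relations at levels $k \leq n-2$ are already implied by the $A_{n-1}$-structure; Lemma \ref{simplifmod} imposes no relation at level $k = n+1$; and the only new constraint, at level $k = n$, is rewritten by Lemma \ref{lemmahoch} as
\[ \partial(\tilde m_n^X * 1) = -\phi_n * 1. \]
Since $m \mapsto m*1$ is a bijection onto $\Baar^{n-1,n-2}(X,X)$, such a $\tilde m_n^X$ exists if and only if $\phi_n*1$ is a coboundary, equivalently $[\phi_n*1] = 0$.

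For part (b), once $[\phi_n*1] = 0$, the same analysis shows that the extensions are parametrized by pairs $(\tilde m_n^X, \tilde m_{n+1}^X)$, where $\tilde m_n^X$ satisfies the displayed equation and $\tilde m_{n+1}^X$ is arbitrary (since by Lemma \ref{simplifmod} no level-$(n+1)$ equation is imposed). Setting $\psi := -\tilde m_n^X * 1$ absorbs the sign and gives $\partial(\psi) = \phi_n*1$, while $\xi := \tilde m_{n+1}^X * 1$ identifies the remaining freedom with an arbitrary element of $\Baar^{n,n-1}(X,X)$. The whole content of the proposition is thus already packaged inside the preceding two lemmas; the only thing to verify carefully is the sign bookkeeping in the $\psi$/$\tilde m_n^X$ bijection and the absence of any additional hidden $A_{n+1}$-relation, both of which I expect to be entirely mechanical.
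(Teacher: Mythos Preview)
Your proposal is correct and follows exactly the paper's approach: the paper's proof is the two-sentence observation that (a) is the content of Lemmas \ref{lemmahoch} and \ref{lemmacocycle}, and (b) follows because \eqref{mnboundaryalt} is the only $A_{n+1}$-relation involving $m_n^X$. The one point you leave implicit is the relation at level $k=n-1$; this does not involve the new $\tilde m_n^X$ or $\tilde m_{n+1}^X$, and it holds precisely because $X$ is assumed to be an $A_n$-module (not merely an $A_{n-1}$-module), so your ``absence of any additional hidden $A_{n+1}$-relation'' check is indeed mechanical.
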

\begin{proof}
Statement (a) is the content of Lemmas \ref{lemmahoch} and \ref{lemmacocycle}. Statement (b)
follows immediately from the fact that equation \eqref{mnboundaryalt} is the only equation involving $m_n^X$ among the equations defining an $A_{n+1}$-structure on $X$.
\end{proof}

\begin{remark}
Let $A$ be a differential graded algebra. In \cite{BKS} the authors consider the problem of 
deciding whether an $H_*(A)$-module $X$ is the homology of an $A$-module. They define a Hochschild cohomology class $\gamma_A \in HH^{3,1}(H_*(A))$ and show that the first obstruction is 
\[ 1_X \cup \gamma_A \in \Ext_{H_*(A)}^{3,1}(X,X)\]
(see \cite[Corollary 6.3]{BKS}). The choice of a cocycle 
representing $\gamma_A$ precisely corresponds to the choice of $m_3^{H_*(A)}$ in 
the inductive proof of Kadeishvili's theorem (compare Lemma \ref{simpmorph} with 
\cite[Construction 5.1 and Remark 5.8]{BKS}). 

The special case of Proposition \ref{hochschild} when $n=3$ says that an $R$-module $X$ has an $A_4$-structure if and only if the map 
\[ (m_2^X(1\otimes m_3))*1 \]
is a coboundary in $\Baar^{3,1}(X,X)$.Thus the obstruction described in \cite{BKS} is exactly the obstruction to the existence of an $A_4$-structure on $X$.
\end{remark}

\begin{example}
\label{nonrealex}
This example amplifies on the example considered in \cite[7.3,7.4,7.6]{BKS}. Let $L=k[z]/z^n$ be the truncated polynomial algebra of height $n$ over a field $k$. Let $A$ be the endomorphism DGA of the complete resolution  $\hat{P}$ of the trivial $L$-module $k$. $\hat{P}$ is defined by $\hat{P}_i=L$ for each $i\in \Z$ with differentials 
$d_i \colon \hat{P}_i \to \hat{P}_{i-1}$ given by the formulas 
\[ d_i = \begin{cases} 
\text{multiplication by } -z^{n-1} & \text{ if } i \text{ is even,} \\
\text{multiplication by } z & \text{otherwise.}
\end{cases}
\]
Note that if $k$ has characteristic $p$ and $n$ is a power of $p$, $L$ is isomorphic to the group
algebra of the cyclic group $C_n$ and then the homology of $A$ is the Tate cohomology of $C_n$.

The homology algebra of $A$ is \cite[Theorem 7.3]{BKS}:
\[ H_*(A) = \begin{cases}
k[x^{\pm 1}] & \text{ if } n=2, \\
\Lambda(x) \otimes k[y^{\pm 1}] & \text{ if } n>2, 
\end{cases}
\]
where $\Lambda(x)$ denotes the exterior algebra on $x$, and $|x|=-1$ and $|y|=-2$. In the proof
the authors define the first two maps $f_1 \colon H_*(A) \to A$ and $f_2 \colon H_*(A) \to A[1]$ 
in a quasi-isomorphism of $A_\infty$-algebras $f\colon H_*(A) \to A$ (cf. Theorem \ref{Kadeishvili})
and use this to find the $A_3$-structure on $H_*(A)$, $m_3 \colon H_*(A)^{\otimes 3} \to H_*(A)[1]$ 
(in their terminology this is the Hochschild cocycle $m$ representing the canonical class as in
the previous remark).

For $n\neq 3$, $m_3$ vanishes, while for $n=3$, it is given by the formula
\[ 
m_3(a,b,c) = \begin{cases}
0 & \text{ if } |a|,|b|, \text{ or } |c| \text{ is even,} \\
y^{i+j+k+1} & \text{ if } a=xy^i,b=xy^j,c=xy^k.
\end{cases}
\]

Proceeding as in the proof of \cite[Theorem 7.3]{BKS} we can inductively find formulas for the 
remaining maps $f_i \colon H_*(A) \to A[i-1]$. From this we see that, in general, 
the $A_\infty$-structure on $H_*(A)$ consists of only $m_2$ and $m_n$ with all 
other $m_k$'s vanishing and $m_n$ given by the formula
\[
m_n(a_1,\ldots,a_n) = \begin{cases}
0 & \text{ if one of the } |a_i|\text{'s is even,} \\
y^{j_1+\ldots+j_n+1} & \text{ if } a_i=xy^{j_i}.
\end{cases}
\]

In the case when $n=3$ the authors show in \cite[Example 7.6]{BKS} that the realizable $H_*(A)$-modules
are precisely the free ones. For $n>3$, any $H_*(A)$-module $X$ admits a trivial $A_n$-structure with $m_k^X=0$ for $2<k\leq n$.
The argument of \cite[Example 7.6]{BKS} shows more generally that for this trivial $A_n$-structure
to extend to an $A_{n+1}$-structure, $X$ must be a free module. However, it is no longer true
that only free modules are realizable. In fact, for $n>3$, all modules are direct summands 
of realizable modules (by the previous calculation together with the main result of \cite{BKS}) but they are certainly not all direct summands of free modules.

For example, for $n>3$,
\[ H_*(\Hom(\hat{P},k[z]/z^2)) = k[y^{\pm 1}] \oplus k[y^{\pm 1}][1] \]
(with $x$ acting trivially) is obviously realizable. 

On the other hand, the $H_*(A)$-module 
\[ X=k[y^{\pm 1}] = H_*(A)/xH_*(A) \]
is not realizable. Indeed, for any
choice of $m_k^X \colon X \otimes H_*(A)^{\otimes (k-1)} \to X[k-2]$ and any $a\in X$ we have 
\[ m_k^X(a,x,\ldots,x) = 0 \]
since $X$ is concentrated in even degrees. It follows that $X$ can't be given an 
$A_{n+1}$-structure: when we evaluate
\[ \sum_{\substack{2\leq r+t+1\leq n \\ r+s+t=n+1}} (-1)^{r+st} m_{r+t+1}^X(1^{\otimes r} \otimes
m_s \otimes 1^{\otimes t}) (a,x,\ldots, x) \]
all terms except $a m_n(x,\ldots,x) = ay$ vanish (either because $x$ acts trivially on $X$, $x^2=0$, $m_k^X(a,x,\ldots,x)=0$, or because $m_k=0$ for $k<n$).

The algebra $\Ext^{\star,\ast}(X,X)$ is a polynomial algebra on $k[y^{\pm 1}]$ on a generator 
in bidegree $(1,-1)$ ($y$ has degree $(0,-2)$). The obstruction to extending the trivial $A_n$-module structure on $X$ to an $A_{n+1}$-structure must therefore be a generator of $\Ext^{n,n-2}_{H_*(A)}(X,X)$.

It is somewhat surprising that for the realizable module $Y=k[y^{\pm 1}] \oplus k[y^{\pm 1}][1]$
we can't choose $m_k^Y$ to vanish for $k<n$. One can check that an $A_\infty$-structure on $Y$
can be defined in the following way. Let $a$ and $b$ be module generators for $Y$ in degrees
$0$ and $-1$ respectively.

If $n$ is even, set (for $k\geq 3$)
\[
m_k^Y(m,xy^{i_2},\ldots,xy^{i_k}) = \begin{cases}
b y^{i_1+\ldots+i_k} & \text{ if } m=ay^{i_1} \text{ and } k=\tfrac{n}{2}-1,\\
a y^{i_1+\ldots+i_k} & \text{ if } m=by^{i_1} \text{ and } k=\tfrac{n}{2}-1,\\
0 & \text{ otherwise.}
\end{cases}
\]

If $n$ is odd, set
\[
m_k^Y(m,xy^{i_2},\ldots,xy^{i_k}) = \begin{cases}
b y^{i_1+\ldots+i_k} & \text{ if } m=ay^{i_1}, k \in\{\tfrac{n+1}{2},\tfrac{n+3}{2}\},
\text{ and } k \text{ is even,}\\
a y^{i_1+\ldots+i_k} & \text{ if } m=by^{i_1}, k \in\{\tfrac{n+1}{2},\tfrac{n+3}{2}\}, 
\text{ and } k \text{ is odd,}\\
0 & \text{ otherwise.}
\end{cases}
\]
\end{example}

The analog for algebras of Proposition \ref{hochschild} is the following. Consider the Hochschild 
complex
\[ (C^{n,m}(S) = \Hom^{m}_{S\otimes S^{op}} (S^{\otimes (n+2)}, S), \partial_H). \]
An element in $C^{n,m}(S)$ is represented by a map of vector spaces $f \colon S^{\otimes n} \to S$ of
degree $m$ and, in these terms, the differential is given by the formula
\[ \partial_H(f) =  m_2 (1\otimes f )- \sum_{j=0}^{n-1} (-1)^j f \circ (1^{\otimes j} \otimes m_2 \otimes 1^{\otimes n-j-1}) + (-1)^n m_2(f\otimes 1). \]
The analog of \eqref{mnboundaryalt} is that an $A_n$-algebra structure on $S$ can be extended
to an $A_{n+1}$-algebra structure iff the following equation is satisfied:
\[ \partial_H(m_n) = \sum_{\stackrel{2<r+t+1<n}{r+s+t=n+1}} (-1)^{r+st} m_{r+t+1}(1^{\otimes r} \otimes m_s \otimes 1^{\otimes t}) \]
and the same computations as in the proofs of Lemmas \ref{lemmahoch} and \ref{lemmacocycle} give the following analog of Proposition \ref{hochschild} which can also be found in \cite[Lemma B.4.1]{Le}.
\begin{prop}
\label{hochschildalgebra}
Let $S$ be a graded algebra. Given an $A_n$-structure on $S$ extending the given $A_2$-structure,
the underlying $A_{n-1}$-structure can be extended to an $A_{n+1}$-structure iff the Hochschild cocycle
\[ \sum_{\stackrel{2<r+t+1<n}{r+s+t=n+1}} (-1)^{r+st} m_{r+t+1}(1^{\otimes r} \otimes m_s \otimes 1^{\otimes t}) \]
represents the trivial class in $HH^{n+1,n-2}(S)$.  
\end{prop}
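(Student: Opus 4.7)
The plan is to mimic verbatim the strategy used for the module version (Proposition \ref{hochschild}), replacing the bar complex $\Baar^{\star,\ast}(X,X)$ by the Hochschild complex $(C^{\star,\ast}(S),\partial_H)$ and $m_k^X$ by $m_k$. The authors have already flagged that ``the same computations'' apply, so the task is mainly to set up the right analogues of Lemmas \ref{lemmahoch} and \ref{lemmacocycle}.

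First I would prove the algebra analogue of Lemma \ref{lemmahoch}: if we start with an $A_n$-structure $(m_2,\ldots,m_n)$ on $S$ and attempt to extend the underlying $A_{n-1}$-structure to an $A_{n+1}$-structure by choosing new operations $(m_2,\ldots,m_{n-1},0,m_{n+1})$, then every relation \eqref{ainftyalgebra} with $m\le n$ is automatic, while the equation for $m=n+1$ splits into the two terms on the right-hand side of the definition of $\partial_H$ (coming from the summands with $s=2$, $r=0$ and $s=2$, $t=0$), the $n-1$ interior $m_2$-insertions, and the remainder
\[ \Psi_n \;=\; \sum_{\substack{r+s+t=n+1 \\ 2 < r+t+1 < n}} (-1)^{r+st}\, m_{r+t+1}\circ(1^{\otimes r}\otimes m_s\otimes 1^{\otimes t}). \]
Collecting signs, this is exactly the equation $\partial_H(m_n)=\Psi_n$. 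Moreover, $m_{n+1}$ does not appear in any lower relation, so the set of extensions is parametrised by a choice of $m_n$ solving this equation together with an arbitrary $m_{n+1}\colon S^{\otimes(n+1)}\to S[n-1]$.

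Second I would prove the analogue of Lemma \ref{lemmacocycle}, namely $\partial_H(\Psi_n)=0$. This is the main combinatorial obstacle. The proof proceeds, just as there, by expanding
\[ \partial_H(\Psi_n) \;=\; m_2(1\otimes \Psi_n) + \sum_{i=0}^{n}(-1)^{i+1}\Psi_n\circ(1^{\otimes i}\otimes m_2\otimes 1^{\otimes n-i}) + (-1)^{n+1}m_2(\Psi_n\otimes 1), \]
splitting each inner sum into the three ranges $0\le i<r$, $r\le i<r+s$, $r+s\le i\le n$ (cf.\ the groupings \eqref{term1}--\eqref{term3}), and then using the relation \eqref{ainftyalgebra} for the $A_{s+1}$-structure on the middle range to rewrite $m_s\circ(1^{\otimes j}\otimes m_2\otimes 1^{\otimes l})$ as a sum of triple compositions. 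The terms with $(j,k,l)\in\{(1,s,0),(0,s,1)\}$ recombine with the outer ranges to produce one master sum indexed by $0\le i\le r+t$, to which one applies the $A_{r+t+2}$-relation. After this move: the components with $b=r+t+1$ cancel pairwise (one pair annihilating the flanking $m_2$-compositions, the other annihilating the remainder of the $A_{s+1}$-relation); the genuinely interior triple compositions $m_{a+c+1}(1^{\otimes a}\otimes m_b\otimes 1^{\otimes c})(1^{\otimes r}\otimes m_s\otimes 1^{\otimes t+1})$ with $2<b\le r+t$ appear with matching opposite signs as the $r,s$ and $a,b$ indices are swapped; and what is left cancels against the surviving piece of the middle term. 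This is word-for-word the computation already carried out, so only the sign convention needs rechecking.

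Finally I would combine the two lemmas: by Step 1, extension of the $A_{n-1}$-structure to an $A_{n+1}$-structure is equivalent to the solvability of $\partial_H(m_n)=\Psi_n$; by Step 2, $\Psi_n$ is automatically a cocycle, hence solvability amounts precisely to $[\Psi_n]=0\in HH^{n+1,n-2}(S)$. The main obstacle is the sign bookkeeping in Step 2, but since the Hochschild differential $\partial_H$ is modelled on the bar differential with identical sign conventions and since the defining relation \eqref{ainftyalgebra} carries the same signs as \eqref{ainftymodule}, the transcription from the module proof is mechanical and introduces no new cancellations.
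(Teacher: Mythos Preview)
Your proposal is correct and follows exactly the paper's own approach: the paper's proof is literally the one-line remark that ``the same computations as in the proofs of Lemmas \ref{lemmahoch} and \ref{lemmacocycle}'' yield the Hochschild analogue, and that is precisely what you have outlined. One small bookkeeping slip in Step~1: the two outer terms $m_2(1\otimes m_n)$ and $m_2(m_n\otimes 1)$ of $\partial_H(m_n)$ arise from the summands with $s=n$ (hence $r+t+1=2$), not from $s=2$; the $s=2$ summands instead supply all $n$ of the terms $m_n(1^{\otimes r}\otimes m_2\otimes 1^{\otimes t})$.
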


If $S$ is an $A_\infty$-algebra with $m_1=m_3= \ldots =m_{n-1}=0$ then $m_n: S^{\otimes n} \ra S$ is a Hochschild cocycle. 
The primary obstruction to realizing a module $X$ is then the obstruction to giving $X$ an 
$A_{n+1}$-structure, namely the class $1_X \cup [m_n] \in \Ext^{n,n-2}_S(X,X)$.
In fact, any $S$-module $X$ can be given an $A_{n-1}$-module structure with $m_3^X=\ldots =m_{n-1}^X=0$ and one can extend this to an $A_{n+1}$-module structure on $X$ if and only if 
\[  (m_2^X(1\otimes m_n))*1, \]
which is a cocycle representing $1_X \cup [m_n]$, is a coboundary in $\Baar^{n,n-2}(X,X)$.
This is exactly the situation for the non-realizable module $X$ in Example \ref{nonrealex}.

\begin{example}
Let $S=R[\epsilon]/\epsilon^2$ where $R$ is a $k$-algebra concentrated in degree $0$ and $|\epsilon|=n-2$. If $\{m_n\}$ is an $A_\infty$-structure on $S$ then for degree reasons $m_i=0$ for $i\neq 2,n$ and $m_n$ is determined by a $k$-linear map $R^{\otimes n} \to R$
which must be a Hochschild cocycle. One can check that two $A_\infty$-structures on $S$ are 
quasi-isomorphic iff the corresponding cocycles represent the same cohomology class (cf. \cite[3.2]{Ke}).
\end{example}

\section{The bar construction}

Recall that we have fixed an $A_\infty$-structure on $H_*(A)$ and 
a quasi-isomorphism $f\colon  H_*(A) \to A$, which in
turn gives $A$ the structure of an $A_\infty$-$H_*(A)$-module (see Remark \ref{module-on-A}).
The goal of this section is to construct a functor, denoted $B(-,H_*(A),A)$, 
from $A_\infty$-$H_*(A)$-modules to differential graded $A$-modules.  
The functor can be writen as a directed colimit of functors $B_{n-1}(-,H_*(A),A)$, 
from $A_n$-$H_*(A)$-modules to differential graded $A$-modules.

\bigskip

Given a (minimal) $A_n$-module structure on an $H_*(A)$-module $X$
\[  m_k^X \colon X \otimes H_*(A)^{\otimes (k-1)} \to X, \quad 2\leq k \leq n, \]
let $R_k$ denote the free differential graded $A$-module defined by
\[ R_k = X \otimes H_*(A)^{\otimes k} \otimes A. \]
For $1\leq l\leq k+1$, let 
\[ M_{k,l} \colon R_k \to R_{k-l+1}[l-2] \]
be defined as 
\[ 
M_{k,l} = \sum_{i=0}^{k+2-l} (-1)^{i(l-1)} 1^{\otimes i}\otimes m_l \otimes 1^{\otimes k-l-i+2} 
\]
where, in the first term of the sum, $m_l$ stands for $m_l^X$ and, in the last term, 
$m_l$ stands for $m_l^A = (-1)^l f_{l-1}*1$ if $l>1$ and for the differential $d$ on $A$ if $l=1$ (see
Remark \ref{module-on-A}).  We will sometimes write $D$ for $M_{k,2}$ and
$d$ for $M_{k,1}$.

The formulas in the following definition were obtained when attempting to construct a Postnikov
system associated to an $A_{n+1}$-module (see Theorem \ref{functorPost}).

\begin{defn}
Given an $A_{n+1}$-module $X$ over $H_*(A)$ (with $1\leq n \leq \infty$), the \emph{bar construction
on $X$} is the right $A$-module $B_{n}(X,H_*(A),A)$ defined by 
\[ 
	\oplus_{i=0}^{n} \bigl( X \otimes H_*(A)^{\otimes i} \otimes A\bigr)[-i]  
	= \oplus_{i=0}^{n} R_i[-i].
\]
The differential on $B_{n}(X,H_*(A),A)$ is defined on the summand $R_l$ by the 
following formula 
\begin{equation}
\label{formulafordifferential}
\partial_{|R_l}
 =  \sum_{i+j+k=l+2} (-1)^{k+j+ij+\lf\frac{j-1}{2}\rf} 1^{\otimes i} \otimes m_j \otimes 1^{\otimes k} 
=\sum_{j=1}^{l+1}  (-1)^{l+\lf\frac{j-1}{2}\rf} M_{l,j} 
\end{equation}
\end{defn}
We use $\lf x\rf$ to denote the greatest integer less than or equal to $x$. The following 
easily checked formula will be used constantly in computations.

\begin{lemma}
\label{niceformula}
For any integers $i$ and $j$ 
\[ \lf\frac{i+1}{2} \rf + \lf \frac j 2 \rf \equiv \lf \frac{j-i}{2}\rf + ij \bmod 2. \]
\end{lemma}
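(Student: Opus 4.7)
The plan is to prove this elementary parity identity by reducing to a case analysis on the residues of $i$ and $j$ modulo $2$. Write $i = 2a + \epsilon$ and $j = 2b + \delta$ with $\epsilon, \delta \in \{0,1\}$; then each of the four quantities in the claim can be expressed simply in terms of $a, b, \epsilon, \delta$.

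First I would compute the three easy pieces:
\[ \lf \tfrac{i+1}{2} \rf = a + \epsilon, \qquad \lf \tfrac{j}{2} \rf = b, \qquad ij = 4ab + 2a\delta + 2b\epsilon + \epsilon\delta \equiv \epsilon\delta \pmod 2. \]
The one potentially delicate term is $\lf (j-i)/2 \rf$, since $j-i$ may be negative and the floor function is only additive in integer shifts. Using $\lf n + x \rf = n + \lf x \rf$ for $n \in \Z$, this term equals $(b-a) + \lf (\delta - \epsilon)/2 \rf$, which is $b-a$ except in the case $(\epsilon, \delta) = (1,0)$, where it is $b-a-1$.

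Substituting into the two sides of the congruence, both LHS and RHS reduce to $a + b + \epsilon \pmod 2$ in all four parity cases $(\epsilon, \delta) \in \{0,1\}^2$, yielding the result. There is no real obstacle here; the only point requiring minor attention is keeping the signs straight for the negative-argument floor, and this is handled by the single case split above.
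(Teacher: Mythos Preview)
Your proof is correct. The paper does not actually give a proof of this lemma at all---it simply states it as an ``easily checked formula''---and your parity case analysis on $(\epsilon,\delta)$ is precisely the routine verification the authors have in mind.
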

\begin{lemma}
\label{dgmodule}
The formulas \eqref{formulafordifferential} give $B_{n}(X,H_*(A),A)$ the structure
of a differential graded $A$-module.
\end{lemma}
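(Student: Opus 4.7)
My plan is to verify three properties of $\partial$: $A$-linearity, degree $-1$, and $\partial^2=0$.

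First, $A$-linearity follows directly from the structure of each $M_{k,l}$. A summand $1^{\otimes i}\otimes m_l \otimes 1^{\otimes (k-l-i+2)}$ with $i < k+2-l$ does not touch the rightmost $A$ factor, hence is tautologically right-$A$-linear. The one term that does touch it, occurring at $i = k+2-l$, acts as $d$ on the last $A$-factor (for $l=1$, giving the internal differential of the free $A$-module $R_k$) or as $m_l^A = (-1)^l \mu\circ(f_{l-1}\otimes 1)$ (for $l\geq 2$), which is right-$A$-linear because $\mu$ is. The degree check is immediate: $M_{l,j}\colon R_l \to R_{l-j+1}[j-2]$ induces a map of the shifted summands $R_l[-l] \to R_{l-j+1}[-(l-j+1)]$ that shifts internal degree by $(j-2)+(-l)-(-(l-j+1))=-1$.

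The main content is $\partial^2=0$. I would expand
\[
\partial^2|_{R_l} = \sum_{j=1}^{l+1}\sum_{j'=1}^{l-j+2} (-1)^{(j-1)+\lfloor(j-1)/2\rfloor + \lfloor(j'-1)/2\rfloor} \, M_{l-j+1,j'}\circ M_{l,j}
\]
(having simplified $l + (l-j+1) \equiv j-1 \pmod 2$) and further expand each composition into a double sum over insertion positions $(i,i')$, pushing leading identities past $m_j$ by the Koszul rule. The resulting terms fall into two families. The \emph{non-nested} terms, those where $m_j$ and $m_{j'}$ act on disjoint ranges of tensor factors, each arise twice in $\partial^2$: once with $m_j$ as the inner insertion and once with $m_{j'}$ inner. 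A sign comparison shows the two occurrences cancel. The \emph{nested} terms are those where $m_j$ is inserted inside one of the input slots of $m_{j'}$.

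For the nested terms, I would group them by the outer operation $m_{j'}$, the outer position, and the total insertion length $m = j+j'-1$. Each group is exactly a copy of one of the $A_\infty$ relations and therefore vanishes: relation \eqref{ainftyalgebra} for $H_*(A)$ when the insertions are confined to consecutive $H_*(A)$ factors; the right $A_\infty$-module relation \eqref{ainftymodule} for $X$ when they involve the leftmost $X$ factor; and the corresponding left $A_\infty$-module relation for $A$ when they involve the rightmost $A$ factor---this last relation follows from the $A_\infty$-morphism identity of Lemma \ref{simpmorph} for $f$ translated via the formula $m_n^A = (-1)^n\mu(f_{n-1}\otimes 1)$ of Remark \ref{module-on-A}. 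The main obstacle is the sign bookkeeping: the internal signs $(-1)^{i(l-1)}$ in $M_{k,l}$, the external signs $(-1)^{l + \lfloor(j-1)/2\rfloor}$ in $\partial$, the Koszul signs from composition, and the shift-sign convention must combine to precisely reproduce the signs $(-1)^{r+st}$ of the $A_\infty$ relations; the parity identity of Lemma \ref{niceformula} is the key tool for these conversions.
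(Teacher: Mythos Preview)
Your proposal is correct and follows essentially the same approach as the paper. The paper's proof is terser: it groups $\partial^2|_{R_l}$ by the target summand $R_m$, obtaining the single sum $\sum_j (-1)^{1-j+\lfloor(j-1)/2\rfloor+\lfloor(l-j-m+1)/2\rfloor} M_{l-j+1,l-j-m+2}M_{l,j}$, then applies Lemma~\ref{niceformula} to rewrite the sign as $(-1)^{(l-m)(j-1)+\lfloor(l-m)/2\rfloor}$, drops the $j$-independent factor, and declares that the remaining expression vanishes ``by the $A_\infty$ relations'' for $H_*(A)$, $X$, and $A$. Your decomposition into non-nested terms (cancelling in pairs) and nested terms (yielding the three flavours of $A_\infty$ relation) is exactly what is hidden inside that last sentence; the paper simply does not spell it out. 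The only organizational difference is that you leave the sum over $j,j'$ unconstrained while the paper fixes $j+j'=l-m+2$, but this is immaterial.
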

\begin{proof}
It's easy to check that the Leibniz rule holds so it is enough to check that 
\eqref{formulafordifferential} defines a differential on $B_{n}(X,H_*(A),A)$.
The projection to $R_m$ of $\partial^2_{|R_l}$ is given by the formula
\begin{equation}
\label{summationthatvanishes}
 \sum_{j=1}^{l-m+1} (-1)^{1-j+ \lf\frac{j-1}{2}\rf + \lf\frac{l-j-m+1}{2}\rf} 
M_{l-j+1,l-j-m+2} M_{l,j}. 
\end{equation} 
By Lemma \ref{niceformula} the sign in the previous expression is equal to 
\[ (-1)^{ (l-m)(j-1) + \lf\frac{l-m}{2}\rf }. \]
Since $(-1)^{\lf\frac{l-m}{2}\rf}$ is independent of $j$, this factor can be eliminated
and the equation $\partial^2=0$ then follows from the relations that must be satisfied
because $H_*(A)$ is an $A_{n+1}$-algebra and $X$ and $A$ are $A_{n+1}$-modules over $H_*(A)$.
\end{proof}
We also need to explain the functoriality of the bar construction.  
\begin{prop}
\label{defnBn}
Let $g:X \ra Y$ be a map of $A_{n+1}$-modules (with $1\leq n \leq \infty$).
The map 
\[ B_n(g)\colon  B_n(X,H_*(A),A) \ra  B_n(Y,H_*(A),A) \]
defined by the matrix with entries
\begin{equation}
\label{bgformula}
B_n(g)_{i,j}= (-1)^{\lf\frac{j-i+1}{2}\rf}g_{j-i+1} \otimes 1^{\otimes i}
\end{equation} 
for $1\leq i \leq j \leq n+1$, or
$$\begin{bmatrix} g_1 \otimes 1 & -g_2 \otimes 1 & -g_3 \otimes 1 & g_4 \otimes 1 & \dots \\
0 &  g_1 \otimes 1^{\otimes 2} & -g_2 \otimes 1^{\otimes 2} & -g_3 \otimes 1^{\otimes 2} & 
\dots \\  0 & 0 &  g_1 \otimes 1^{\otimes 3} & -g_2 \otimes 1^{\otimes 3} & \dots \\
0 & 0 & 0 &  g_1 \otimes 1^{\otimes 4} & \dots \\
\dots & \dots & \dots & \dots & \dots 
\end{bmatrix}$$
is a map of differential graded $A$-modules.
\end{prop}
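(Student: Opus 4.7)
The $A$-linearity of $B_n(g)$ is immediate from formula \eqref{bgformula}: each matrix entry has the form $g_{j-i+1} \otimes 1^{\otimes i}$ where the identity factor includes the rightmost $A$ factor of $R_{j-1}^X = X \otimes H_*(A)^{\otimes(j-1)} \otimes A$, and the right $A$-action on both bar constructions is only on this last tensor factor. Thus the substantive content is to verify the chain-map condition
\[ \partial^Y \circ B_n(g) = B_n(g) \circ \partial^X. \]

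My plan is to fix $0 \leq m \leq l \leq n$ and compute the $R_l^X[-l] \to R_m^Y[-m]$ component of each side. Using \eqref{formulafordifferential} and \eqref{bgformula}, the LHS expands as
\[ \sum_{r=m}^{l} (-1)^{r + \lfloor (r-m)/2\rfloor + \lfloor (l-r+1)/2\rfloor}\, M^Y_{r, r-m+1}\circ (g_{l-r+1}\otimes 1^{\otimes(r+1)}), \]
and the RHS has an analogous expression with $\partial^X$ applied first and then the appropriate piece of $B_n(g)$. Each $M_{k,s}$ further breaks up as a sum over insertion positions of $1^{\otimes i}\otimes m_s \otimes 1^{\otimes (k-s-i+2)}$, where $m_s$ is interpreted as $m_s^Y$ (resp.\ $m_s^X$), as $m_s^{H_*(A)}$, or as $m_s^A$ according to whether $i=0$, $i$ is interior, or $i = k-s-i+2$. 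The key step is then to match terms on the two sides according to the relative position of the unique $m$-operation and the unique $g$-operation in the tensor string.

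Terms in which the $m$- and $g$-operations act on disjoint tensor positions (for instance $m^A$ on the $A$-tail, $m^{H_*(A)}$ acting on $H_*(A)$-factors to the right of the image of $g_k$, or $m^X/m^{H_*(A)}$ on the RHS acting on positions past the input range of $g_k$) appear identically on both sides of the equation; Lemma \ref{niceformula} is used to check that the floor-function signs in \eqref{formulafordifferential} and \eqref{bgformula} collapse to the same prefactor $(-1)^{\lfloor (l-m)/2\rfloor}$ times the sign $(-1)^{r+st}$ from the internal $M_{k,s}$. After these terms match and cancel, what remains on each side are precisely the terms in which the $m$-operation interacts directly with $g$: on the LHS, $m^Y_s$ consumes the output of $g_{l-r+1}$ together with subsequent $H_*(A)$-factors; on the RHS, $g_{r-m+1}$ is applied to the output of $m^X_s$ or to a string of inputs containing an interior $m^{H_*(A)}_s$. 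The identity of these two collections, after tensoring on the right with the identity on the final $H_*(A)^{\otimes m} \otimes A$, is exactly the $A_\infty$-morphism equation \eqref{ainftymor} for $g$ applied in the first $l-m+1$ slots. The principal obstacle throughout is the sign bookkeeping: the signs produced by combining $(-1)^{l+\lfloor (j-1)/2\rfloor}$ from $\partial$, $(-1)^{\lfloor(j-i+1)/2\rfloor}$ from $B_n(g)$, and $(-1)^{i(l-1)}$ from $M_{k,l}$ must be simplified via Lemma \ref{niceformula} to match the signs $(-1)^{r+st}$ and $(-1)^{(i+1)j}$ appearing in \eqref{ainftymor}. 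Once this verification is done, the chain-map condition holds summand by summand and the lemma is proved.
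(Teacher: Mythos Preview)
Your proposal is correct and follows exactly the approach the paper intends: the paper omits the argument, saying only that it is ``similar to the one above'' (namely Lemma \ref{dgmodule}), and your plan is precisely that computation adapted to the chain-map condition, with the $A_\infty$-morphism relation \eqref{ainftymor} playing the role that the $A_\infty$-algebra/module relations played in showing $\partial^2=0$, and Lemma \ref{niceformula} handling the signs.
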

\begin{proof}
This computation is similar to the one above and hence is omitted.
\end{proof}

We also write $B(g) = B_{\infty}(g)$.

\begin{prop}
\label{functbarconst}
For $1 \leq n \leq \infty$, the assignments
\[ X \mapsto B_n(X,H_*(A),A) \quad \left(X \xrightarrow{g} Y\right) \mapsto B_n(g) \]
define a functor from $A_{n+1}-H_*(A)$-modules to differential graded $A$-modules.
\end{prop}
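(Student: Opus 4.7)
The plan is to check the two functoriality axioms for the assignment $g \mapsto B_n(g)$. Preservation of the identity is immediate from \eqref{bgformula}: the identity $A_\infty$-morphism has $(\id_X)_1 = \id_X$ and $(\id_X)_k = 0$ for $k \geq 2$, so $B_n(\id_X)_{i,j}$ vanishes for $i < j$ and equals $\id_X \otimes 1^{\otimes i}$ on the diagonal, giving the identity map on $B_n(X,H_*(A),A)$.

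For preservation of composition, I invoke the standard composition law for $A_\infty$-module morphisms: given $h\colon X \to Z$ and $g\colon Z \to Y$,
$$(g\circ h)_n = \sum_{\substack{p+q=n+1\\p,q\geq 1}}(-1)^{\epsilon(p,q)}\,g_p\circ(h_q\otimes 1^{\otimes(p-1)}),$$
with signs $\epsilon(p,q)$ dictated by the Koszul convention. I then compute the $(i,l)$ entry of $B_n(g)\circ B_n(h)$ as
$$\sum_{k=i}^{l}(-1)^{\lfloor(k-i+1)/2\rfloor+\lfloor(l-k+1)/2\rfloor}\,(g_{k-i+1}\otimes 1^{\otimes i})\circ(h_{l-k+1}\otimes 1^{\otimes k}).$$
To apply the Koszul commutation rule $(f\otimes g)\circ(h\otimes j)=(-1)^{|g||h|}(f\circ h)\otimes(g\circ j)$ in each summand, regroup $h_{l-k+1}\otimes 1^{\otimes k}=(h_{l-k+1}\otimes 1^{\otimes(k-i)})\otimes 1^{\otimes i}$ so the tensor splitting matches that of $g_{k-i+1}\otimes 1^{\otimes i}$. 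Since $|1^{\otimes i}|=0$, no extra sign is introduced and the summand collapses to $\bigl(g_{k-i+1}\circ(h_{l-k+1}\otimes 1^{\otimes(k-i)})\bigr)\otimes 1^{\otimes i}$. Substituting $p=k-i+1$, $q=l-k+1$ rewrites the sum in the form $\sum_{p+q=l-i+2}(\text{sign})\bigl(g_p\circ(h_q\otimes 1^{\otimes(p-1)})\bigr)\otimes 1^{\otimes i}$, which up to an overall sign should match $(-1)^{\lfloor(l-i+1)/2\rfloor}(g\circ h)_{l-i+1}\otimes 1^{\otimes i}=B_n(g\circ h)_{i,l}$.

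The one remaining---and only substantive---step is to check the sign congruence
$$\lfloor p/2\rfloor + \lfloor q/2\rfloor\equiv\lfloor(p+q-1)/2\rfloor+\epsilon(p,q)\pmod 2$$
for all $p,q\geq 1$. This reduces to a short parity case analysis (it turns out the left side is $1$ iff both $p$ and $q$ are even), streamlined by Lemma \ref{niceformula}; it is the kind of sign bookkeeping the authors defer from the analogous statement of Proposition \ref{defnBn} and is the principal (though entirely elementary) obstacle in the proof.
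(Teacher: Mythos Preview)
Your proposal is correct and is exactly the approach the paper takes: the paper's entire proof is the single sentence ``Matrix multiplication precisely corresponds to the composition of $A_{n+1}$-module maps as defined on \cite[p.~15]{Ke},'' and you have simply made that verification explicit (identity, then entrywise matrix product matched against Keller's composition formula, with the floor-function sign identity as the residual check). Your parity analysis gives $\epsilon(p,q)\equiv (p-1)(q-1)\pmod 2$, which is indeed Keller's sign for composition of module morphisms.
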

\begin{proof}
Matrix multiplication precisely corresponds to the composition of $A_{n+1}$-module maps as defined on \cite[p. 15]{Ke}.
\end{proof}

\begin{remark}
The quasi-isomorphism of $A_\infty$-algebras $f \colon H_*(A) \we A$ makes 
$A$ and $A_\infty$-$H_*(A)$-$A$-bimodule. Although the formula for the differential \eqref{formulafordifferential} is different, it seems likely that $B_\infty(-,H_*(A),A)$ is equivalent to the functor $-\stackrel{\infty}{\otimes}_{H_*(A)} A$ considered in \cite[Section 4.1, p.114]{Le}.
\end{remark}

\section{$A_n$-structures and Postnikov systems}

In this section we describe the obstruction theory to realizing a module based on the notion
of a Postnikov system \cite{BKS} and show that the bar construction of the previous section
gives us a functor from $A_{n+1}$-module structures to $n$-Postnikov systems. We then show
that the obstructions to extending an $A_{n+1}$-structure or its associated $n$-Postnikov system 
agree. It follows by induction that any Postnikov system arising from the bar resolution
of $X$ comes from an $A_{n+1}$-structure. Finally we prove that this assignment is fully
faithful completing the proof of Theorem \ref{main}.

\bigskip
In this section, we will often use the following simple formula for the maps in the derived category of $A$-modules when the source is free: if $V$ is a $k$-module and $N$ is a 
differential graded module over $A$ then
\[ [V \otimes A, N] = \Hom_{H_*(A)}(V\otimes H_*(A), H_*(N)). \]

\begin{defn}
\label{postnikovdef}
Let $A$ be a differential graded algebra and $X$ be an $H_*(A)$-module. An
\emph{$n$-Postnikov system} for $X$ is a commutative diagram in the derived category
of $A$-modules
\[\xymatrix{
Y_n \ar[d]_{j_n} &  Y_{n-1} \ar[d]_{j_{n-1}} & Y_{n-2} \ar[d]^{j_{n-2}} & & Y_1 \ar[d]_{j_1} \\
C_n \ar[ur]^{i_{n}} \ar[r]_{d_n} & C_{n-1} \ar[ur]^{i_{n-1}} \ar[r]_{d_{n-1}} & C_{n-2} & \cdots & C_1 \ar[r]_{i_1} & C_0 
}\]
satisfying 
\begin{enumerate}[(i)]
\item $j_k$ is the homotopy fiber of $i_{k}$ (i.e. $Y_k \to C_k \to Y_{k-1}$ is part of a triangle),
\item $C_k$ is a free $A$-module,
\item there is a map $H_*(C_0) \to X$ such that the following is an exact sequence $H_*(C_n) \to \cdots \to H_*(C_0) \to X \to 0$.
\end{enumerate}
Maps of $n$-Postnikov systems are maps of diagrams in the derived category which 
restrict to maps of triangles.

We say that an $n$-Postnikov system is \emph{based on the bar resolution}
if $H_*(C_\star)$ is isomorphic to the bar resolution for $X$. A map is \emph{based on the bar resolution} if the maps $H_*(C_k) \to H_*(C_k')$ are of the form $g\otimes 1^{\otimes(k+1)}$ with 
$g\colon X \to X'$ a map of $H_*(A)$-modules.
\end{defn}
\begin{remark}
The previous definition differs from the definition of $n$-Postnikov system in \cite[Definition A.6]{BKS} in that the homotopy fiber of $i_{n}$ is included in the diagram. This distinction
is only relevant when considering maps of Postnikov systems. 
\end{remark}
A simple diagram chase shows (see \cite[Lemma A.12]{BKS}) that an $n$-Postnikov system yields an
exact sequence
\[ 0 \to X[n-1] \to H_*(Y_{n-1}) \to H_*(C_{n-1}) \to \ldots \to H_*(C_0) \to X \to 0. \]
It follows from the proof of this result that the following diagram commutes
\begin{equation}
\label{diagram}
\xymatrix{
& H_*(C_0)[n-1] \ar[dr]^j \ar[d] & \\ 0 \ar[r] & X[n-1] \ar[r] \ar[d] & H_*(Y_{n-1}) \\
 & 0 & }
\end{equation}
where $j$ denotes the composite
\[ H_*(C_0)[n-1] \to H_*(Y_1)[n-2] \to \cdots \to H_*(Y_{n-2})[1] \to H_*(Y_{n-1}). \]
Note that when $C_*$ is the bar resolution, the nontrivial vertical map in 
\eqref{diagram} is the multiplication map $X\otimes H_*(A) \to X$.

\begin{thm}
\label{functorPost}
Let $X$ be an $A_{n+1}$-module over $H_*(A)$, $R_k = X \otimes H_*(A)^{\otimes k} \otimes A$ and 
$Y_k = B_k(X,H_*(A),A)[k]$. Then the following diagram of $A$-modules projects to an $n$-Postnikov system for $X$:
\[ 
\xymatrix{
Y_n \ar[d]_{\pi_n} & Y_{n-1} \ar[d]_{\pi_{n-1}} & Y_{n-2}\ar[d]_{\pi_{n-2}} & \ldots & Y_1 \ar[d]_{\pi_1} & \\
R_n \ar[ur]^{i_n} \ar[r]_{M_{n,2}} & R_{n-1} \ar[ur]^{i_{n-1}} \ar[r]_{M_{n-1,2}} & R_{n-2} & \ldots & R_1 \ar[r]_{M_{1,2}} & R_0. }
\]
Here $\pi_k$ denotes the projection onto the last summand and 
\[ 
i_k = \begin{bmatrix}
(-1)^{\lf\frac k 2\rf} M_{k,k+1} \\
\vdots \\
(-1)^{\lf\frac{j-1}{2}\rf} M_{k,j}\\ 
\vdots \\
M_{k,2}
\end{bmatrix} \]
This assignment is functorial with respect to maps of $A_{n+1}$-modules.
\end{thm}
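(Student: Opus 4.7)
The proof reduces to checking the three conditions of Definition \ref{postnikovdef} together with functoriality. Condition (ii) is immediate since $R_k = X\otimes H_*(A)^{\otimes k}\otimes A$ is a free $A$-module by construction. For (iii) and the claim that the system is based on the bar resolution, I would compute $H_*(R_k)=X\otimes H_*(A)^{\otimes(k+1)}$ and verify that $H_*(M_{k,2})$ is (up to overall sign) the standard bar differential, with augmentation $X\otimes H_*(A)\to X$ given by $m_2^X$; exactness of the bar resolution is then standard. Commutativity $\pi_{k-1}\circ i_k = M_{k,2}$ in the derived category holds on the nose: only the $j=2$ component of $i_k$ lands in the $R_{k-1}$ summand, with sign $(-1)^{\lfloor 1/2\rfloor}=1$.

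The main content is condition (i), i.e., that $Y_k\to R_k\to Y_{k-1}$ is a triangle. The plan is to produce an explicit isomorphism between $Y_k$ and the standard homotopy fiber model of $i_k\colon R_k\to Y_{k-1}$ given in the introduction. As graded $A$-modules,
\[ Y_k = B_k(X,H_*(A),A)[k] = R_k \oplus \bigoplus_{i<k} R_i[k-i] = R_k \oplus Y_{k-1}[1], \]
matching the fiber $F = Y_{k-1}[1]\oplus R_k$. I then compare the differential with the matrix $\begin{bmatrix} -d_{Y_{k-1}} & i_k \\ 0 & d_{R_k} \end{bmatrix}$ block by block. On $R_k\to R_k$ only the $j=1$ term of \eqref{formulafordifferential} survives and contributes $(-1)^{2k}d = d_{R_k}$; on $R_k\to Y_{k-1}[1]$ the $j\geq 2$ terms have sign $(-1)^{2k+\lfloor (j-1)/2\rfloor} = (-1)^{\lfloor (j-1)/2\rfloor}$ and assemble into $i_k$ exactly as written; on $Y_{k-1}[1]\to Y_{k-1}[1]$ the overall signs differ from those in the differential of $Y_{k-1}$ by a single factor of $-1$ coming from the extra shift, yielding $-d_{Y_{k-1}}$; and the $Y_{k-1}[1]\to R_k$ block vanishes because every $M_{l,j}$ in \eqref{formulafordifferential} strictly decreases the bar index. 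These four verifications identify $Y_k$ with the homotopy fiber of $i_k$ and produce the required triangle.

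For functoriality, a map $g\colon X\to Y$ of $A_{n+1}$-modules gives, by Proposition \ref{functbarconst}, a chain map $B_n(g)$ whose matrix is upper triangular in the decomposition by $R_i$, with diagonal $g_1\otimes 1^{\otimes(i+1)}$ on $R_i$. This triangular structure says that $B_n(g)$ restricts to compatible chain maps of the underlying subcomplexes of the various $Y_k$, and the projections $\pi_k$ strictly commute with these restrictions (only the diagonal entry can contribute). Compatibility with the $i_k$ in the derived category is then automatic from the fact that $B_n(g)$ is a chain map preserving the homotopy fiber model just identified. The chief obstacle throughout is the sign bookkeeping in condition (i), where Lemma \ref{niceformula} plays the essential role just as in the proof of Lemma \ref{dgmodule}.
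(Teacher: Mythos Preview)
Your proposal is correct and follows essentially the same approach as the paper: the paper also writes out the differential of $Y_k$ as a matrix with $(i,j)$-entry $(-1)^{k-j+1+\lfloor (j-i)/2\rfloor}M_{j-1,j-i+1}$ and reads off the block decomposition $\partial_{Y_k}=\begin{bmatrix} -\partial_{Y_{k-1}} & i_k \\ 0 & d\end{bmatrix}$, exactly as you do. Your treatment of conditions (ii), (iii), the commutativity $\pi_{k-1}\circ i_k=M_{k,2}$, and functoriality is a bit more explicit than the paper's (which simply cites Proposition~\ref{functbarconst} for the last point); one small aside is that Lemma~\ref{niceformula} is not actually needed for the sign check here---the verifications you describe go through directly.
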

\begin{proof}
By definition
\[ Y_k = R_0[k] \oplus \ldots \oplus R_{k-1}[1] \oplus R_k \]
as a graded $A$-module, and the $ij$-th entry ($1\leq i \leq j \leq k+1$) of the matrix $\partial_{Y_k}$ is 
\[ (-1)^{k-j+1+ \lf\frac{j-i}{2}\rf} M_{j-1,j-i+1}. \]
Therefore the differential on $Y_k$ satisfies the following inductive formula
\[ \partial_{Y_k} = \begin{bmatrix} -\partial_{Y_{k-1}} & i_k \\ 0 & d \end{bmatrix}. \]
It follows that $i_k$ is a map of differential graded modules because
this condition is precisely the condition that the upper right hand vector in the matrix 
$\partial_{Y_k}^2$ vanishes. Clearly $Y_k$ is the homotopy fiber of $i_k \colon R_k \to Y_{k-1}$.
Finally, functoriality follows from Proposition \ref{functbarconst}.
\end{proof}

\begin{defn}
The \emph{canonical $n$-Postnikov system} associated to an $A_{n+1}$-structure on $X$
is the Postnikov system defined in Theorem \ref{functorPost}.
\end{defn}

\begin{thm}
\label{bijectivecorr}
Let $X$ be an $A_{n+1}$-module over $H_*(A)$. There is a bijective correspondence between
the sets of
\begin{enumerate}[(i)]
\item $A_{n+2}$-structures $(m_2^X,\ldots,m_{n+1}^X,\phi)$ on $X$,
\item lifts in the homotopy category 
\[ \xymatrix{ & Y_n \ar[d]^{\pi_n} \\ R_{n+1} \ar@{-->}[ur]^j \ar[r]_{D} & R_n. } \]
\end{enumerate}
The assignment sends $(m_2^X,\ldots,m_{n+1}^X,\phi)$ to the homotopy class of the map $i_{n+1}$ defined in Theorem \ref{functorPost} from the $A_{n+2}$-structure.

In other words, an $A_{n+1}$-structure on $X$ extends one stage iff its associated canonical $n$-Postnikov system extends one stage and, in that case, the extensions are in bijective correspondence. 
\end{thm}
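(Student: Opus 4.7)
The plan is to invert the map of Theorem \ref{functorPost}. The forward direction is essentially that theorem: an $A_{n+2}$-structure $(m_2^X, \ldots, m_{n+1}^X, \phi)$ yields a chain map $i_{n+1} \colon R_{n+1} \to Y_n$ satisfying $\pi_n \circ i_{n+1} = M_{n+1,2} = D$ by construction. Only the top summand $\pm M_{n+1,n+2}$ of $i_{n+1}$ involves the new map $\phi$; the remaining summands depend only on the fixed $A_{n+1}$-structure on $X$ together with the $A_\infty$-$H_*(A)$-module structure on $A$. Hence $\phi \mapsto [i_{n+1}]$ is well-defined.

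For the inverse, two structural observations drive the argument. First, since $R_{n+1}$ is free as an $A$-module on $V = X \otimes H_*(A)^{\otimes (n+1)}$ with $\partial_{R_{n+1}}|_{V \otimes 1} = 0$, a chain map $j \colon R_{n+1} \to Y_n$ is the same data as a $k$-linear map $\bar j \colon V \to Y_n$ of appropriate degree whose image consists of $\partial_{Y_n}$-cycles, and two such chain maps are chain-homotopic iff their restrictions to $V$ differ by a $\partial_{Y_n}$-boundary. Second, the filtration $F^l = \bigoplus_{k \leq l} R_k[n-k]$ is by subcomplexes of $Y_n$, since by \eqref{formulafordifferential} the differential $\partial_{Y_n}|_{R_l}$ takes $R_l[n-l]$ into $R_l[n-l] \oplus F^{l-1}$. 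Given a homotopy class of lift $[j]$, I would first adjust $j$ within its homotopy class (by lifting the chain homotopy $\pi_n j \simeq D$ along $\pi_n$ --- a preimage exists because $R_{n+1}$ is $A$-projective and $\pi_n$ is split as a map of graded $A$-modules) to arrange $\pi_n j = D$ strictly, then decompose $j$ by summands as $(j_0, j_1, \ldots, j_{n-1}, D)$ and restrict to $V$. Unpacking $\partial_{Y_n} \bar j = 0$ summand by summand (the same calculation as in Lemma \ref{dgmodule} run in reverse), the intermediate components $\bar j_l$ for $1 \leq l < n$ are inductively forced to coincide with the corresponding $\pm M_{n+1, n+2-l}$ summands of $i_{n+1}$ modulo $\partial_{Y_n}$-boundaries, while $\bar j_0$ contains a single piece of new data $\phi \colon V \to X[n]$; the top-summand compatibility reduces to the $A_{n+2}$-relation $\partial(m_{n+1}^X*1) + \phi_{n+1}*1 = 0$ of Lemma \ref{lemmahoch}. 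For injectivity, if $\phi \neq \tilde\phi$ gave the same homotopy class, then $(\phi - \tilde\phi)*1 \colon R_{n+1} \to R_0[n] \subset Y_n$ would be a $\partial_{Y_n}$-boundary; the filtration argument together with the requirement $\pi_n j = D$ strictly forces any such bounding homotopy to land in $F^0$, whence $\phi = \tilde\phi$.

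The main obstacle is the sign bookkeeping: matching weighted compositions $(-1)^{l + \lfloor (j-1)/2 \rfloor} M_{l,j} \circ \bar j_{l'}$ against the signed summands of $i_{n+1}$ requires the same combinatorial identities used in the proofs of Lemmas \ref{lemmacocycle} and \ref{dgmodule}, invoking Lemma \ref{niceformula} repeatedly. The computation is algorithmic but tedious; as a sanity check, the fact that everything closes up is already guaranteed by Theorem \ref{functorPost}, so the bulk of the reverse calculation merely retraces the forward derivation.
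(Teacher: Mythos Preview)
Your chain-level normalization strategy has a genuine gap at the inductive step. After fixing $j_n=D$ strictly, the cycle condition $\partial_{Y_n}\bar j=0$ at level $l$ only determines $d\,\bar j_l$ in terms of the higher components; it pins $\bar j_l$ down only up to a $d$-\emph{cycle} in $R_l$, not up to a $d$-boundary. So the deviation $c_l=\bar j_l-(\pm M_{n+1,n+2-l}|_V)$ is a cycle whose homology class need not vanish, and a homotopy $H$ with $H_{l'}=0$ for $l'>l$ can only move $\bar j_l$ by a boundary. To kill $[c_l]$ you must allow $H_{l+1}\neq 0$ (a $d$-cycle), contributing $\pm M_{l+1,2}H_{l+1}$ at level $l$; this works precisely because the lower cycle condition forces $[c_l]\in\ker\bigl(H_*(M_{l,2})\bigr)=\operatorname{im}\bigl(H_*(M_{l+1,2})\bigr)$, i.e.\ because the bar resolution is exact. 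You never invoke this exactness, and without it the ``forcing'' claim is false. The injectivity argument has the same defect: from $(\partial_{Y_n}h)_l=0$ for $l>0$ you get only that $h_n$ is a $d$-cycle, not that $h$ lands in $F^0$.

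The paper avoids all of this by staying at the homology level. Since $R_{n+1}$ is free, $[R_{n+1},Z]=\Hom_{H_*(A)}(H_*(R_{n+1}),H_*(Z))$ for any $Z$. The paper computes the obstruction $\overline{i_nD}$ by exhibiting an explicit homotopy $H_n$ making $i_nD+\partial_{Y_{n-1}}H_n+H_nd$ factor through $R_0[n-1]$; composing with the multiplication $X\otimes H_*(A)\to X$ (diagram~\eqref{diagram}) identifies it with the obstruction cocycle of Lemma~\ref{lemmahoch}. For the bijection, the paper observes that lifts of $D$ form a torsor, and that the difference $i_{n+1}(\phi)-i_{n+1}(0)$ already factors through $R_0[n]\hookrightarrow Y_n$ with homology-level effect $\phi*1$; since $X[n]\hookrightarrow H_*(Y_n)$ and the multiplication map is injective on $X\otimes 1$, the assignment $\phi\mapsto[i_{n+1}(\phi)]$ is a bijection. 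This bypasses the need to normalize intermediate components entirely.
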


\begin{proof}
The canonical $n$-Postnikov system associated to the $A_{n+1}$-structure on $X$ extends if and only if the map $i_nD$ is null. Since $R_{n+1}$ is a free $A$-module, this is equivalent to $H_*(i_n D)$ being the zero map. As $X[n-1] \to H_*(Y_{n+1})$ is an inclusion, this amounts to the vanishing of the map $\overline{i_n D}$ in the commutative diagram
\[
\xymatrix{
\ar[r]^(.3){H_*(D)} & H_*(R_{n+1}) \ar@{-->}[d]^{\overline{i_n D}} \ar[r]^{H_*(D)}  & H_*(R_n) \ar[d]_{H_*(i_n)} \ar[dr]^{H_*(D)} & & &  \\
0 \ar[r] & X[n-1] \ar[r] & H_*(Y_{n-1}) \ar[r] & H_*(R_{n-1}) \ar[r] & H_*(R_{n-2}). 
}\]

We will show that
\begin{equation}
\label{eqlemma}
\overline{i_n D} = (-1)^{\lf\frac{n-1}{2}\rf + n+ 1} \left( \sum_{\stackrel{r+s+t=n+2}{2\leq r+t+1\leq n+1}} (-1)^{r+st} 
m_{r+t+1}^X (1^{\otimes r} \otimes m_s \otimes 1^{\otimes t})\right)*1 
\end{equation}
Lemma \ref{lemmahoch} then implies that the canonical $(n+1)$-Postnikov system extends if and only if the $A_{n+1}$-structure extends to an $A_{n+2}$-structure.

To prove \eqref{eqlemma}, we need to compute $H_*(i_n D)$. We will add a null homotopic map to $i_n D$ in order to perform the computation. For $n\geq 2$, let $H_n \colon R_{n+1} \to Y_{n-1}$ be the map defined by the column vector
\[
\begin{bmatrix}
(-1)^{\lf\frac{n-1}{2}\rf+n+1} 1\otimes m_{n+2}^A\\
(-1)^{\lf\frac{n-2}{2}\rf} M_{n+1,n+1}\\
\vdots \\
(-1)^{\lf\frac{n-j}{2}\rf} M_{n+1,n+3-j} \\
\vdots \\
M_{n+1,3}
\end{bmatrix}
\]
We now compute the effect of the map
\[ i_nD + (\partial_{Y_{n-1}} H_n + H_n d) \]
on homology. We will show first that $i_n D + (\partial_{Y_{n-1}} H_n + H_n d) $ factors 
through $R_0[n-1]$: For $i\geq 2$ the $i$-th component of this map is 
\begin{eqnarray*}
&(-1)^{\lf\frac{n-i+1}{2}\rf}M_{n,n+2-i}M_{n+1,2} + \\
&\sum_{j=i}^{n} (-1)^{n-j + \lf(j-i)/2\rf + \lf(n-j)/2\rf} 
M_{j-1,j+1-i} M_{n+1,n+3-j} + \\
&+ (-1)^{\lf(n-i)/2\rf}M_{n+1,n+3-i}M_{n+1,1} 
\end{eqnarray*}
and this simplifies to 
\[ (-1)^{n+\lf\frac{i+1}{2}\rf+\lf\frac n 2\rf} \sum_{j=i}^{n+2} (-1)^{j(n+i)} 
M_{j-1,j+1-i} M_{n+1,n+3-j} \]
which up to sign is exactly the sum \eqref{summationthatvanishes} and therefore vanishes (only the
$A_{n+1}$-structure is used).

The first component of $i_n D + (\partial_{Y_{n-1}} H_n + H_n d)$ is
\begin{equation}
\label{Ansimp1}
(-1)^{\lf\frac{n-1}{2}\rf} \left( \sum_{j=2}^{n+1} (-1)^{j(n+1)} 
M_{j-1,j} M_{n+1,n+3-j} +  d (1\otimes m_{n+2}^A) + (-1)^{n+1} 1\otimes (m_{n+2}^A d) \right)
\end{equation}
Using the $A_\infty$-$H_*(A)$-module structure on $A$, a computation similar to the proof of Lemma \ref{dgmodule} shows that this formula simplifies to 
\begin{equation}
\label{boundaryeq}
(-1)^{\lf \frac{n-1}{2}\rf + n+1}\left(\sum_{\substack{2\leq r+t+1\leq n+1 \\ r+s+t=n+2}} (-1)^{r+st} m_{r+t+1}^X(1^{\otimes r}\otimes m_s \otimes 1^{\otimes t})\right)\otimes 1. 
\end{equation}
By the commutativity of diagram \eqref{diagram}, the map $H_*(R_{n+1}) \to X[n-1]$ is obtained by composing $\eqref{boundaryeq}$ with the multiplication map $X\otimes H_*(A) \to X$. This proves \eqref{eqlemma}.

It remains to prove the bijection between $A_{n+2}$-structures extending the
given $A_{n+1}$-structure and the extensions of the canonical $n$-Postnikov system when an extension exists. In that case, the $A_{n+2}$-structures are arbitrary $k$-module maps
\[ \phi \colon X \otimes H_*(A)^{\otimes (n+1)} \to X[n]. \]
On the other hand, a homotopy class of maps $j\colon R_{n+1} \to Y_n$ lifting $D$ is the same
as an $H_*(A)$-module map $H_*(R_{n+1}) \to X[n]$. Writing $i_{n+1}(\phi)$ for the lift associated
to a $k$-module map $\phi$, the formula for $i_{n+1}(\phi)$ shows that $i_{n+1}(\phi)-i_{n+1}(0)$
factors through $R_{0}[n]$ and hence (see diagram \eqref{diagram}) the $H_*(A)$-module map associated
to $i_{n+1}(\phi)-i_{n+1}(0)$ is 
\[ \phi *1 \colon H_*(R_{n+1}) \to X[n].\] 
This shows that homotopy classes of lifts of $D$ are in bijective correspondence with $k$-module maps $X \otimes H_*(A)^{\otimes(n+1)} \to X[n]$ and completes the proof.
\end{proof}

\begin{cor}
\label{mainequiv}
Any $n$-Postnikov system based on the bar resolution for $X$ is isomorphic to the canonical $n$-Postnikov system associated to an $A_{n+1}$-structure on $X$.
\end{cor}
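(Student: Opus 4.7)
The plan is to proceed by induction on $n$, using Theorem \ref{bijectivecorr} as the inductive engine.

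The base case is $n=1$. A $1$-Postnikov system based on the bar resolution is determined, up to isomorphism, by its image on homology: the data $H_*(C_0) = X \otimes H_*(A)$, $H_*(C_1) = X\otimes H_*(A)^{\otimes 2}$, the augmentation $H_*(C_0) \to X$ (which is forced to be the action map because $H_*(C_\star)$ is the bar resolution of $X$), and the bar differential $d_1$. Using the formula $[V\otimes A, N] = \Hom_{H_*(A)}(V\otimes H_*(A), H_*(N))$ recalled at the start of Section 5, any such data lifts to a unique (up to homotopy) map $C_1 \to C_0$ of free $A$-modules, and the rest of the Postnikov diagram (the $Y_i$ and the maps $j_i, i_i$) is then determined up to canonical isomorphism by the triangle axioms. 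This produces an isomorphism to the canonical $1$-Postnikov system associated to the $A_2$-structure (i.e., the $H_*(A)$-module structure) on $X$.

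For the inductive step, assume the statement for $n-1$ and let $P$ be an $n$-Postnikov system based on the bar resolution for $X$. Truncating yields an $(n-1)$-Postnikov system $P'$ (forget $C_n$ and $Y_n$, and replace the old $Y_{n-1}$ by the homotopy fiber of $i_{n-1}$). By the inductive hypothesis, $P'$ is isomorphic to the canonical $(n-1)$-Postnikov system $P_0'$ coming from some $A_n$-structure $\sigma$ on $X$. Transport the extension data of $P$ (the maps $C_n \xrightarrow{i_n} Y_{n-1}$ and the resulting triangle) across this isomorphism to obtain an extension of $P_0'$ to an $n$-Postnikov system, which amounts to a lift $R_n \to Y_{n-1}$ of $D \colon R_n \to R_{n-2}$ in the homotopy category. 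Theorem \ref{bijectivecorr} (applied with $n$ replaced by $n-1$) says precisely that such lifts are in bijection with $A_{n+1}$-structures $\tilde\sigma$ extending $\sigma$, and that the canonical $n$-Postnikov system of $\tilde\sigma$ has $i_n$ equal to the given lift. Hence the canonical $n$-Postnikov system of $\tilde\sigma$ is isomorphic to $P$.

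The main obstacle I anticipate is verifying that the isomorphism $P' \cong P_0'$ provided by induction can be promoted to an isomorphism of the full $n$-systems $P \cong$ canonical system of $\tilde\sigma$. The bijection in Theorem \ref{bijectivecorr} handles precisely this: once we fix the identification at level $n-1$, the extensions on both sides match because $i_n$ (equivalently, the lift $R_n \to Y_{n-1}$) fully determines $C_n$ and $Y_n$ up to isomorphism of triangles (using that $C_n \simeq R_n$ in the derived category as both are free $A$-modules whose homology is the $n$-th term of the bar resolution, hence a free $H_*(A)$-module). Thus the inductive isomorphism extends canonically to the new column, completing the argument.
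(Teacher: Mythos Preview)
Your proof is correct and follows the same approach as the paper: induction on $n$, with Theorem \ref{bijectivecorr} supplying the inductive step. The paper's own proof is the two-line version of exactly this argument (``For $n=1$ the statement is clearly true. The result follows by induction from Theorem \ref{bijectivecorr}.''), so you have simply spelled out the details the authors left implicit. One small slip: in your inductive step the lift is of $D\colon R_n \to R_{n-1}$, not $R_n \to R_{n-2}$; also, the parenthetical about ``replacing the old $Y_{n-1}$'' is unnecessary since $Y_{n-1}$ already is the homotopy fiber of $i_{n-1}$ by Definition \ref{postnikovdef}.
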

\begin{proof}
For $n=1$ the statement is clearly true.
The result follows by induction from Theorem \ref{bijectivecorr}.
\end{proof}

\begin{lemma}
\label{lemmaakcond}
Let $(g_1,\ldots,g_k)$ be an $A_k$-map between two $A_{k+1}$-modules $X$ and $X'$. Then the square
\[ \xymatrix{ R_k \ar[d]_{g_1\otimes 1^{\otimes (k+1)}} \ar[r]^{i_k} & Y_{k-1} 
\ar[d]^{B_{k-1}(g)} \\
R_k' \ar[r]^{i_k'} & Y_{k-1}' }\]
commutes up to homotopy if and only if $(g_1,\ldots,g_k,0)$ is an $A_{k+1}$-map.
\end{lemma}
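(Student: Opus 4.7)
The plan is to adapt the strategy used in the proof of Theorem \ref{bijectivecorr}. Set
\[ \Delta = B_{k-1}(g)\circ i_k - i_k'\circ(g_1\otimes 1^{\otimes(k+1)})\colon R_k \to Y_{k-1}'. \]
Since $R_k$ is a free differential graded $A$-module, the square commutes up to homotopy if and only if $H_*(\Delta)=0$. So the task reduces to computing the homology class of $\Delta$ and showing that it vanishes precisely when the $A_{k+1}$-morphism equation is satisfied by $(g_1,\ldots,g_k,0)$.

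Next, I would filter the target by the Postnikov structure, writing $Y_{k-1}'=R_0'[k-1]\oplus\cdots\oplus R_{k-1}'$ as a graded $A$-module. The aim is to produce a null-homotopy $H\colon R_k\to Y_{k-1}'$, built from the $g_j$ and the maps $f_{l-1}$ defining the $A_\infty$-module structure on $A$, in direct analogy with the auxiliary map $H_n$ from the proof of Theorem \ref{bijectivecorr}, so that $\Delta + (\partial_{Y_{k-1}'}H + Hd)$ factors through the top summand $R_0'[k-1]$. The vanishing of each intermediate component is a consequence of combining the $A_k$-morphism equations for $(g_1,\ldots,g_k)$, the $A_{k+1}$-module equations on $X$ and $X'$, and the sign identities of Lemma \ref{niceformula}. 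These are exactly the same cancellations that appear in Lemma \ref{dgmodule} and in the vanishing of \eqref{summationthatvanishes}, now with $g_j$-factors intertwined with the $m_s$.

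Once the factorization through $R_0'[k-1]$ is achieved, diagram \eqref{diagram} identifies the homology class of the modified map with the composite of a map $H_*(R_k)\to H_*(R_0')[k-1]$ with the multiplication map $H_*(R_0')[k-1]\to X'[k-1]$. A computation of this top component, modelled on the simplification leading to \eqref{boundaryeq}, should identify it with
\[ \pm\Biggl( \sum_{r+s+t=k+1} (-1)^{r+st} g_{r+t+1}(1^{\otimes r}\otimes m_s \otimes 1^{\otimes t}) - \sum_{i+j=k+1} (-1)^{(i+1)j} m_{j+1}'(g_i\otimes 1^{\otimes j}) \Biggr)*1, \]
under the convention $g_{k+1}=0$. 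The bracketed expression is precisely the $(k+1)$-st $A_\infty$-morphism equation \eqref{ainftymor} for $(g_1,\ldots,g_k,0)$, so its vanishing is equivalent to $(g_1,\ldots,g_k,0)$ being an $A_{k+1}$-map.

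The main obstacle is the sign and index bookkeeping involved in selecting $H$ and matching the residual top component against \eqref{ainftymor}; the non-leading cancellations require careful pairing of the $A_k$-morphism relations against the $A_{k+1}$-module relations on either side. No new conceptual input beyond Theorem \ref{bijectivecorr} is needed — this lemma is the ``morphism'' counterpart of the ``obstruction-to-extension'' computation there, and has the same computational skeleton.
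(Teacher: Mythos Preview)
Your proposal follows the same strategy as the paper's proof: add a homotopy to $\Delta$, observe that the modified map factors through the top summand $R_0'[k-1]$, and identify the resulting map $H_*(R_k)\to X'[k-1]$ with the $(k+1)$-st morphism equation for $(g_1,\ldots,g_k,0)$. The conclusion you state is exactly the one reached in the paper.

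One correction worth flagging: the homotopy you need here is \emph{simpler} than the $H_n$ of Theorem~\ref{bijectivecorr}, not a direct analogue of it. The paper's auxiliary map is the column vector with top entry $0$ and remaining entries $(-1)^{\lf (k-i+2)/2 \rf} g_{k-i+2}\otimes 1^{\otimes i}$ for $2\le i\le k$; no $f_{l-1}$ or $m_j^A$ terms appear. The reason is that the $m_j^A$-contributions to $B_{k-1}(g)\circ i_k$ and $i_k'\circ(g_1\otimes 1^{\otimes(k+1)})$ cancel on the nose (they sit on the $A$-factor, past the slot where $g_1$ acts), so there is nothing to homotope away on that side. Correspondingly, the intermediate components vanish purely by the $A_k$-\emph{morphism} equations for $(g_1,\ldots,g_k)$; the $A_{k+1}$-module relations on $X$, $X'$ are not needed at that step. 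Your anticipated bookkeeping is therefore lighter than you expect, but the overall argument is the same.
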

\begin{proof}
Because $g_1$ is a map of $H_*(A)$-modules, the square 
\[ \xymatrix{ R_k \ar[r] \ar[d] & R_{k-1} \ar[d] \\ R_k' \ar[r] & R_{k-1}' }
\]
commutes strictly and so the difference on homology lies in the kernel of $Y_{k-1}' \to R_{k-1}'$.
This kernel is a desuspension of $X$. We want to compute the 
map 
\[ B_{k-1}(g) i_k - i_k' (g_1\otimes 1^{\otimes (k+1)}) : H_*(R_k) \to X'[k-1]\] 
We will add a nulhomotopic map so as to make the factorization of this map through $R_0'[k-1]$
apparent. The homotopy is given by the formula
\[ H_k = \begin{bmatrix} 0 \\ (-1)^{\lf \frac k 2\rf} g_k \otimes 1^{\otimes 2} \\ \vdots \\ (-1)^{\lf\frac{k-i+2}{2}\rf}g_{k-i+2}\otimes 1^{\otimes i} \\ \vdots \\ -g_2\otimes 1^{\otimes k} \end{bmatrix}. \]
One computes that 
\[ B_{k-1}(g) i_k - i_k' (g_1\otimes 1^{\otimes (k+1)}) + ( \partial_{Y_{k-1}'} H_k +  H_k d )\]
has all components zero except the first one because $(g_1,\ldots,g_k)$ is an $A_k$-map.
 When composed with the multiplication $m_2^{X'} \colon R_0' \to X$, the first component 
yields the $(k+1)$-ary map whose vanishing is synonymous  with $(g_1,\ldots,g_k,0)$ being an $A_{k+1}$-map.
\end{proof}

Using the functor $B_n(-,H_*(A),A)$ from the last section, we can now complete the proof of
Theorem \ref{main}.
\begin{thm}
\label{mainequiv2}
Let $X$ and $X'$ be (minimal) $A_{n+1}$-$H_*(A)$-modules. There is a bijective correspondence between 
$A_{n+1}$-maps $g \colon X \to X'$, and maps between the associated canonical $n$-Postnikov systems 
based on the bar resolution.
\end{thm}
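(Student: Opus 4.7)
The plan is to prove the statement by induction on $n$, with Lemma \ref{lemmaakcond} playing the role for the morphism case analogous to Lemmas \ref{lemmahoch} and \ref{lemmacocycle} in Theorem \ref{bijectivecorr} for the object case. Restriction furnishes maps from $A_{n+1}$-maps to $A_n$-maps on one side, and from $n$-Postnikov maps to $(n-1)$-Postnikov maps on the other side (forgetting $C_n$, $Y_n$ and the arrows involving them); by Proposition \ref{functbarconst}, these restrictions intertwine the functors $B_n$ and $B_{n-1}$. Granting the equivalence at level $n-1$, it suffices to compare the fibers of the two restrictions over each fixed $A_n$-map $(g_1,\ldots,g_n)$.

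Fix $(g_1,\ldots,g_n)$. An extension to an $n$-Postnikov map adds the data of $C_n\to C_n'$ -- forced to be $g_1\otimes 1^{\otimes(n+1)}$, with a unique derived-category lift since $R_n$ and $R_n'$ are free -- together with a map $Y_n\to Y_n'$ completing a map of triangles. This $Y_n\to Y_n'$ exists if and only if the square
\[
\xymatrix{R_n \ar[r]^{i_n} \ar[d]_{g_1\otimes 1^{\otimes(n+1)}} & Y_{n-1} \ar[d]^{B_{n-1}(g)} \\ R_n' \ar[r]^{i_n'} & Y_{n-1}'}
\]
commutes in the derived category, and by Lemma \ref{lemmaakcond} this is equivalent to $(g_1,\ldots,g_n,0)$ being an $A_{n+1}$-map. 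In the minimal setting, the $m=n+1$ equation for an $A_{n+1}$-map constrains only $(g_1,\ldots,g_n)$ (since $m_1=0$ kills all $f_{n+1}$ terms on both sides of \eqref{ainftymor}), so this coincides exactly with the existence of some $A_{n+1}$-extension on the left. The fibers are therefore simultaneously empty or non-empty.

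When non-empty, the $A_{n+1}$-extensions on the left are parametrized by arbitrary $k$-linear maps $g_{n+1}\colon X\otimes H_*(A)^{\otimes n}\to X'[n]$. Formula \eqref{bgformula} shows that varying $g_{n+1}$ by $\delta$ alters $B_n(g)$ only in its top-right matrix entry, by $\pm\delta\otimes 1\colon R_n\to R_0'[n]$. A homology computation analogous to the final paragraph of the proof of Theorem \ref{bijectivecorr}, using the analogue of diagram \eqref{diagram} for $Y_n'$ (identifying $X'[n]$ as the submodule $\ker(H_*(Y_n')\to H_*(R_n'))$), shows that this matrix entry precisely parametrizes the derived-category indeterminacy in lifting $Y_n\to Y_n'$, yielding a bijection between $k$-linear maps $g_{n+1}$ and homotopy classes of lifts.

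The hard part will be this final parametrization: verifying that every homotopy class of lift $Y_n\to Y_n'$ arises from a unique $g_{n+1}$. As in the proof of Theorem \ref{bijectivecorr}, this reduces to an explicit calculation on the bar-construction model -- adding a judiciously chosen null-homotopy to force the factorization through $R_0'[n]$, then confirming that distinct $k$-linear maps $g_{n+1}$ induce distinct homology effects in the submodule $X'[n]\subset H_*(Y_n')$, with no additional derived-category collapses. The base case $n=1$ is handled by the same comparison of fibers, where the induction hypothesis is replaced by the observation that a map of $0$-Postnikov systems based on the bar resolution is simply an $H_*(A)$-module map $g_1$.
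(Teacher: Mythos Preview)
Your proposal is correct and follows essentially the same route as the paper. Both arguments proceed by induction on $n$, invoke Lemma \ref{lemmaakcond} to match commutativity of the key square with the $A_{n+1}$-map condition on $(g_1,\ldots,g_n,0)$, and then identify the residual indeterminacy in the lift $Y_n\to Y_n'$ with a choice of $g_{n+1}$ via the factorization through $R_0'[n]\hookrightarrow Y_n'$ and the identification of $X'[n]$ as the kernel of $H_*(Y_n')\to H_*(R_n')$. Your framing in terms of comparing fibers of the two restriction maps is a clean way to package the same content; the paper instead separates injectivity (checked directly) from surjectivity (proved inductively), but the ingredients are identical.
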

\begin{proof}
Given $g$, the desuspensions of the maps $B_k(g)$, $1\leq k \leq n$, described in Proposition \ref{defnBn} give the desired map of Postnikov systems. It is easy to check that this
assignment is injective (if two $A_n$-maps first differ on $g_k$, the induced
maps $Y_{k-1} \to Y_{k-1}'$ will not be homotopic).

The converse is proved by induction. For $n=1$, a map of Postnikov systems of the sort
described above is determined by a map of $H_*(A)$-modules $g_1 \colon X \to X'$ and a map
\[ f_1 \colon Y_1 \to Y_1' \]
such that 
\[ \xymatrix{ Y_1 \ar[d]^{f_1} \ar[r]& R_1  \ar[d]^{g_1\otimes 1\otimes 1} \ar[r] & R_0 \ar[d]^{g_1\otimes 1} \\ 
Y_1' \ar[r] & R_1' \ar[r] & R_0' }\]
is a map of triangles. Thus $f_1$ can be represented by a matrix
\[ \begin{bmatrix} 
g_1\otimes 1 & \tilde{g_2} \\
0 & g_1\otimes 1 \otimes 1 
\end{bmatrix}.\]
Since $g_1$ is a map of $H_*(A)$-modules, the matrix above with $\tilde g_2 = 0$ also 
defines a map of triangles. The difference between these two matrices factors as 
\[ Y_1 \to R_1 \to R_0'[1] \to Y_1' \]
There is a unique representative for the homotopy class of the middle map of the form 
$g_2\otimes 1$ and therefore $f_1$ has a unique representative of the form 
\[ \begin{bmatrix} 
g_1\otimes 1 & -g_2\otimes 1 \\
0 & g_1\otimes 1 \otimes 1 
\end{bmatrix}.\]
The only requirement for $(g_1,g_2)$ to be a map of $A_2$-modules is that $g_1$ commutes
with the multiplication. This completes the proof for $n=1$.

Suppose given a map of $k$-Postnikov systems based on the bar construction. By induction
we know that there is a unique map $g=(g_1,\ldots,g_k)$ of $A_k$-modules
such that $Y_{j} \to Y_{j}'$ is $B_j(g)$ for $j\leq k-1$.

There is a commutative square
\[ \xymatrix{ Y_k \ar[r] \ar[d]^{f_k} & R_k \ar[r]^{i_k} \ar[d]^{g_1\otimes 1^{\otimes (k+1)}} &  Y_{k-1} \ar[d]^{B_{k-1}(g)} \\ Y'_k \ar[r] & R_k' 
\ar[r]_{i_k'} & Y_{k-1}' } \]

By Lemma \ref{lemmaakcond},  $(g_1,\ldots,g_k,0)$ is an $A_{k+1}$-module map.
Let $d= f_k - B_{k+1}(g_1,\ldots,g_k,0)$. This difference factors
as
\[ Y_k \to R_k \to Y_{k-1}'[1] \to Y_k'. \]
The homotopy class of a map from $R_k$ is determined by its effect on homology. Since
$R_k \to Y_k'$ factors through $Y_{k-1}'[1]$, it is 0 along $R_k'$ and hence its
image lies in the kernel of the map $Y_k' \to R_k'$ which is $X'[k]$.

Therefore it factors through $R_0'[k]$ up to homotopy and the homotopy class is therefore
represented uniquely by a map of the form $(-1)^{\lf \frac{k+1}{2}\rf} g_{k+1}\otimes 1$. We conclude that the homotopy class
of $f_k$ is equal to that of $B_{k+1}(g_1,\ldots,g_{k+1})$ (note that any choice of $g_{k+1}$
will give an $A_{k+1}$-map).
\end{proof}
The previous Theorem show that the functor sending an $A_{n+1}$-structure to its
canonical $n$-Postnikov system is full and faithful. Corollary \ref{mainequiv} asserts
that this functor is essentially surjective hence it is an equivalence of categories.
This completes the proof of Theorem \ref{main}.
 
\begin{remark}
It follows from Theorem \ref{mainequiv2} that if $g: X \to X'$ is an $A_k$-map such
that $g_1$ is an isomorphism then $g$ is an isomorphism.
\end{remark}

Let $X$ be an $H_*(A)$-module. The \emph{moduli groupoid of $A_{n+1}$-structures on $X$}
is the groupoid with objects $A_{n+1}$-module structures on $X$ and quasi-isomorphisms
$g$ between them with $g_1=\id$. Note that this is equivalent to the groupoid of $A_{n+1}$-modules $X'$ together with an isomorphism of $H_*(A)$-modules $X' \to X$.

\begin{cor}
\label{mainequiv3}
The moduli groupoid of $A_{n+1}$-structures on $X$ is equivalent to the groupoid of $n$-Postnikov
systems for $X$ based on the bar resolution and isomorphisms which are the identity on the bar
resolution.
\end{cor}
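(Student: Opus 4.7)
The plan is to obtain this by restricting the equivalence of Theorem \ref{main} (the combination of Corollary \ref{mainequiv} and Theorem \ref{mainequiv2}) to the two sub-groupoids named in the statement. Write $\mathcal{F}$ for the functor sending an $A_{n+1}$-module $(X, m_2, \ldots, m_{n+1})$ to its canonical $n$-Postnikov system of Theorem \ref{functorPost}. The key observation is that $\mathcal{F}$ interacts transparently with the bar resolution: on objects, the bottom-row free $A$-modules are $C_k = R_k$ with $H_*(R_k) = X \otimes H_*(A)^{\otimes(k+1)}$, literally the $k$-th term of the bar resolution of $X$; on morphisms, reading the diagonal entries of the matrix \eqref{bgformula} defining $B_n(g)$, the component of $\mathcal{F}(g)$ on $C_k = R_k$ is $g_1 \otimes 1^{\otimes(k+1)}$. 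Hence $\mathcal{F}$ sends exactly those $A_{n+1}$-maps with $g_1 = \id_X$ to maps of Postnikov systems which are the identity on the bar resolution.

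Fully faithfulness of the restricted functor then follows immediately from Theorem \ref{mainequiv2}: for any two $A_{n+1}$-structures $(X, m)$ and $(X, m')$ on the same $H_*(A)$-module $X$, the $A_{n+1}$-maps $g$ with $g_1 = \id_X$ correspond bijectively to maps of the associated canonical Postnikov systems which are the identity on the bar resolution. Each such morphism is automatically invertible: on the algebraic side by the Remark after Theorem \ref{mainequiv2}, and on the Postnikov side because in each triangle $Y_k \to C_k \to Y_{k-1}$, if the map on $C_k$ is the identity and (by induction) the map on $Y_{k-1}$ is an isomorphism, then so is the map on $Y_k$.

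For essential surjectivity, let $P$ be an $n$-Postnikov system for $X$ based on the bar resolution. Corollary \ref{mainequiv} supplies an $A_{n+1}$-structure $(X'', m'')$ and an isomorphism $\phi \colon \mathcal{F}(X'', m'') \to P$ of Postnikov systems. Its restriction to the bottom row is an isomorphism of bar resolutions of $X''$ and $X$ which, by $H_*(A)$-linearity of the bar differentials and freeness over $H_*(A)$, has the form $g_1 \otimes 1^{\otimes(k+1)}$ for a uniquely determined $H_*(A)$-module isomorphism $g_1 \colon X'' \to X$. Transporting $m''$ along $g_1$ yields an $A_{n+1}$-structure $m$ on $X$ for which $(g_1, 0, \ldots, 0) \colon (X'', m'') \to (X, m)$ is a strict $A_{n+1}$-isomorphism; then $\phi \circ \mathcal{F}(g_1, 0, \ldots, 0)^{-1} \colon \mathcal{F}(X, m) \to P$ exhibits $P$ as isomorphic to $\mathcal{F}(X, m)$ via an isomorphism that is the identity on the bar resolution. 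The main book-keeping point, and essentially the only place where there is anything to verify, is this identification of the bar-resolution component of $\phi$ with a map of the standard form $g_1 \otimes 1^{\otimes(k+1)}$.
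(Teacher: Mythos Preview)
Your overall strategy --- restrict the equivalence of Theorem \ref{main} to the two subgroupoids --- is exactly what the paper has in mind (the paper gives no separate proof of the corollary). Your treatment of full faithfulness is fine.

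There is, however, a genuine gap in your essential surjectivity argument. You invoke Corollary \ref{mainequiv} to obtain an isomorphism $\phi\colon \mathcal{F}(X'',m'')\to P$ and then assert that its restriction to the bottom row ``has the form $g_1\otimes 1^{\otimes(k+1)}$ for a uniquely determined $H_*(A)$-module isomorphism $g_1$, by $H_*(A)$-linearity of the bar differentials and freeness.'' This is not true: an arbitrary chain isomorphism between bar resolutions (equivalently, an arbitrary system of $H_*(A)$-module isomorphisms $H_*(C_k)\to H_*(C_k')$ commuting with the bar differentials) need \emph{not} be of the form $g_1\otimes 1^{\otimes(k+1)}$. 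Any chain automorphism of the bar complex that differs from the identity by $\partial s+s\partial$ for a nontrivial homotopy $s$ gives a counterexample. The justification you offer does not rule these out, and you yourself flag this as ``essentially the only place where there is anything to verify.''

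The fix is to avoid this step entirely. First, Corollary \ref{mainequiv} already produces an $A_{n+1}$-structure on $X$ itself (not on an auxiliary $X''$). More importantly, its \emph{proof} --- the induction via Theorem \ref{bijectivecorr} --- constructs the isomorphism so that on each $C_k$ it is the chosen identification $H_*(C_k)\cong X\otimes H_*(A)^{\otimes(k+1)}$, i.e.\ the identity on the bar resolution. Indeed, the inductive step transports the lift $C_n\to Y_{n-1}$ along the already-constructed isomorphism (which is the identity on the $C_k$ for $k<n$), and Theorem \ref{bijectivecorr} then matches this lift with an $m_{n+1}^X$; no nontrivial map on $C_n$ is ever introduced. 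Once you use this stronger output of Corollary \ref{mainequiv}, essential surjectivity for the subgroupoid is immediate and your transport-of-structure detour becomes unnecessary.
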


\appendix

\section{Relation between realization of Postnikov systems and chain complexes}
\label{appen}
In this appendix we explain the relation between Postnikov systems and rigidifying complexes
in a homotopy category (see \cite{DKS} for the general theory of realizing diagrams). We explain this in the setting of model categories (see \cite{Ho}). The model category $\cat C$ which is relevant for this paper is the category of differential graded modules over a DGA $A$ with the standard projective model structure (see for example \cite{SS}).

\begin{defn}
Let $\cat C$ be a pointed category. A \emph{chain complex} in $\cat C$ is a sequence
of maps in $\cat C$
\[  \cdots \xrightarrow{d} C_n \xrightarrow{d} C_{n-1} \xrightarrow{d} \cdots \xrightarrow{d} C_0 \]
such that $d d = *$.
\end{defn}

\begin{defn} 
If $\cat C$ is a pointed model category, a \emph{Postnikov system} is a 
commutative diagram in $\Ho(\cat C)$
\[\xymatrix{
& Y_n \ar[d]_{j_n} & Y_{n-1} \ar[d]_{j_{n-1}} & & Y_1 \ar[d]_{j_1} \\
\cdots \ar[r] & C_n \ar[ur]^{i_{n-1}} \ar[r] & C_{n-1} \ar[r] & \cdots & C_1 \ar[r]_{i_0} & C_0
}\]
where for each $k$, the sequence
\[ Y_k \xrightarrow{j_k} C_k \xrightarrow{i_{k-1}} Y_{k-1} \]
is a homotopy fiber sequence (we set $Y_0=C_0$). 

An \emph{$m$-Postnikov system} is a diagram as above but with objects only those $Y_i$ and $C_i$ 
where $i\geq m+1$.
\end{defn}
Note that in a Postnikov system, $C_\bullet$ is a chain complex in $\Ho(\cC)$.

\begin{defn}
Let $\cat C$ be a model category, $\pi \colon \cat C \to \Ho(\cat C)$ be the canonical functor
and $I$ be a small category. A diagram $F:I \to \Ho(\cat C)$ is \emph{realizable} if there exists a diagram $\tilde F: I \to \cat C$ together with a natural isomorphism $\phi: \pi \tilde F
\to F$. The diagram $\tilde F$ is then called a \emph{realization} of $F$.

If $\cat C$ is pointed we say that a diagram is \emph{strictly realizable} if $F(\alpha)=*$
implies that $\tilde F(\alpha)=*$ and $\tilde F$ is then called a strict realization of 
$F$.
\end{defn}

\begin{prop}
\label{comp-eq}
Let $\cat C$ be a pointed model category. Let $C_\bullet$ be a chain complex in $\Ho(\cat C)$. Then
the following are equivalent:
\begin{enumerate}[(i)]
\item $C_\bullet$ is strictly realizable,
\item $C_\bullet$ extends to a Postnikov system,
\end{enumerate}
\end{prop}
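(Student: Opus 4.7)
The plan is to prove both implications by inductively building or realizing a strict tower of homotopy-fiber sequences in $\cC$. For $(i) \Rightarrow (ii)$, suppose $\tilde C_\bullet$ in $\cC$ is a strict realization of $C_\bullet$ with $\tilde d \tilde d = \ast$ holding on the nose. I would inductively construct a tower
\[ \tilde Y_k \xrightarrow{\tilde j_k} \tilde C_k \xrightarrow{\tilde i_{k-1}} \tilde Y_{k-1} \]
in $\cC$ starting from $\tilde Y_0 = \tilde C_0$ and $\tilde i_0 = \tilde d$; at each stage I take $\tilde Y_k$ to be an explicit strict model for the homotopy fiber of $\tilde i_{k-1}$ (for instance a path-object pullback after cofibrant-fibrant replacement, or the explicit shifted mapping-cone model from the paper's conventions in the DG case). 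The lift of $\tilde d$ to a map $\tilde i_k \colon \tilde C_{k+1} \to \tilde Y_k$ exists because the obstruction $\tilde i_{k-1} \circ \tilde d$ is strictly the basepoint: its projection to $\tilde C_{k-1}$ is $\tilde d \tilde d = \ast$, and the path-object coordinates of $\tilde i_{k-1}$ can be chosen to be constant by induction. Passing to $\Ho(\cC)$ then yields the desired Postnikov system.

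For $(ii) \Rightarrow (i)$, given the Postnikov system $(Y_k, C_k, i_k, j_k)$ in $\Ho(\cC)$, I would inductively realize it strictly in $\cC$. Having fixed strict representatives for the $\tilde C_\ell, \tilde Y_\ell$ with $\ell < k$, I would factor a representative of $\tilde i_{k-1}$ as a trivial cofibration followed by a fibration and take $\tilde Y_k$ to be the strict pullback of this fibration along the basepoint. This arranges that $\tilde i_{k-1} \circ \tilde j_{k-1}$ is literally the basepoint morphism. I would then define the differential by $\tilde d_n := \tilde j_{n-1} \circ \tilde i_n$; the identity
\[ \tilde d_{n-1} \circ \tilde d_n = \tilde j_{n-2} \circ (\tilde i_{n-1} \circ \tilde j_{n-1}) \circ \tilde i_n = \ast \]
then holds strictly, exhibiting $\tilde C_\bullet$ as a strict realization of $C_\bullet$.

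The main obstacle is the inductive realization in $(ii) \Rightarrow (i)$: at each stage I must lift the homotopy class of $i_k$ to an actual morphism of $\cC$ compatibly with the model of $\tilde Y_{k-1}$ already chosen, and then form $\tilde Y_k$ as a genuine strict fiber so that $\tilde i_{k-1} \tilde j_{k-1} = \ast$ strictly rather than merely up to homotopy. This is routine for the tower shape of a Postnikov system once one uses functorial factorizations, and in the DG-module context of this paper it is entirely transparent from the explicit homotopy-fiber model recorded in the conventions section.
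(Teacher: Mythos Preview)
Your argument is correct and follows essentially the same route as the paper: in both directions one inductively builds a tower of \emph{strict} fiber sequences by factoring the relevant map as a trivial cofibration followed by a fibration and taking the literal fiber, so that the composite $i\circ j$ through a fiber inclusion is the basepoint on the nose. The only blemishes are off-by-one slips in your $(ii)\Rightarrow(i)$: the fiber inclusion of $\tilde i_{k-1}\colon \tilde C_k\to \tilde Y_{k-1}$ is $\tilde j_k\colon \tilde Y_k\to \tilde C_k$ (not $\tilde j_{k-1}$), and with that indexing the differential is $\tilde d_k=\tilde j_{k-1}\circ\tilde i_{k-1}$, giving $\tilde d_{k-1}\tilde d_k=\tilde j_{k-2}\,(\tilde i_{k-2}\tilde j_{k-1})\,\tilde i_{k-1}=\ast$; once these indices are corrected the proof reads exactly as the paper's.
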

\begin{proof}
\emph{(i) $\Rightarrow$ (ii)}:
Replacing $C_\bullet$ if necessary by an isomorphic complex we may assume that
\[ \cdots \to C_n \xrightarrow{d_{n-1}} C_{n-1} \to \cdots \]
is a chain complex in $\cat C$ projecting to $C_\bullet$.

Replacing the map $C_1 \xrightarrow{d_0} C_0$ by a fibration we obtain a 
diagram
\[
\xymatrix{ C_3 \ar[r]^{d_2} &
C_2 \ar[r]^{d_1} \ar@{-->}[d]_{i_1} & C_1 \ar[r]^{d_0} \ar[d]_{\sim} & C_0 \\
& Y_1 \ar[r] \ar@/_/[rr] & C_1' \ar@{->>}[ur] & \ast \ar[u] 
}\]
where $Y_1$ is the homotopy fiber of $C_1' \to C_0$.
Since $d_{0} d_{1}=*$, there is a canonical factorization $C_2 \llra{i_1} Y_1$.
Furthermore, the composite 
\[ C_3 \llra{d_2} C_2 \llra{i_1} Y_1\]
is the zero map since its composite with the map $Y_1 \to C_1'$ is 
zero by construction of $i_1$. 

We may apply the same procedure to the sequence of maps 
\[ \cdots \to C_3 \xrightarrow{d_{2}} C_{2} \xrightarrow{i_1} Y_1 \]
and continuing inductively we obtain a Postnikov system which we denote 
by $P(C_\bullet)$.
 
This construction is clearly functorial so we have defined a functor
\begin{equation}
\label{functorP}
P \colon \mathcal{CC} \longrightarrow \mathcal{PS}
\end{equation}
from the category of chain complexes in $\cat C$ to the category of Posnikov systems in 
$\Ho(\cat C)$ which sends weak equivalences to isomorphisms. 

\noindent
\emph{(ii)$\Rightarrow$ (i)}: Let
\[\xymatrix{
& Y_n \ar[d]_{j_n} & Y_{n-1} \ar[d]_{j_{n-1}} & & Y_1 \ar[d]_{j_1} \\
\cdots \ar[r] & C_n \ar[ur]^{i_{n-1}} \ar[r] & C_{n-1} \ar[r] & \cdots & C_1 \ar[r]_{i_0} & C_0
}\]
be a Postnikov system in $\Ho(\cat C)$. 

We will write $\overline f$ for an arbitrary representative of the map $f \in \Ho(\cat C)$
and $[\psi]$ for the homotopy class of $\psi \in \cat C$.

First note we can assume that all the objects $Y_k$ and $C_k$ are fibrant and cofibrant.
 We will construct a chain complex $\tilde C_\bullet$ in $\cat C$ lifting $C_{\bullet}$ inductively. 

Let $\tilde C_0 = C_0$. Let
\[ \xymatrix{  C_1 \ar[d]_{\overline d_0} \ar[r]^{\phi_1} & \tilde{C}_1 \ar@{->>}[dl]^{\tilde d_0} \\
C_0 & }\]
be a factorization of ${\overline d_0}$ into a trivial cofibration followed by a fibration and 
\[ \tilde Y_1 \xrightarrow{\tilde{j_1}} \tilde C_1 \]
be the inclusion of the fiber of $\tilde d_0$. Since $Y_1$ is the homotopy fiber of $d_0$, there is an isomorphism $\psi_1 \colon Y_1 \to \tilde Y_1$ such that
\[  \xymatrix{ 
Y_1 \ar[d]_{j_1} \ar[r]^{\psi_1} & \tilde{Y_1} \ar[d]_{[\tilde j_1]} \\
C_1 \ar[r]^{[\phi_1]} & \tilde C_1 }\]
commutes.

Now factor $\overline{\psi_1}\ {\overline i_1} \colon C_2 \to \tilde Y_1$ (which exists because $C_2$ is cofibrant
and $\tilde Y_1$ is fibrant) as a trivial cofibration $\phi_2$ followed by a
fibration $\tilde i_1$. We get a comutative diagram
\[ \xymatrix{
C_2 \ar[r]^{\phi_2 } \ar[d]_{\overline{i_1}} & \tilde C_2 \ar@{->>}[d]_{\tilde i_1} \\
Y_1 \ar[r]_{\overline{\psi_1}} & \tilde Y_1  } \]
Let $\tilde d_1 = \tilde j_1 \tilde i_1$. Since $\tilde Y_1$ is the fiber of $\tilde d_0$ it follows that
the composite $\tilde d_0 \tilde d_1$ is the zero map.

Let $\tilde j_2 \colon \tilde Y_2 \to C_2$ denote the inclusion of the fiber of $\tilde i_1$.
Since $Y_2 \to C_2 \to Y_1$ is a fiber sequence, there is an isomorphism $\psi_2\colon Y_2 \to \tilde Y_2$
in $\Ho(\cat C)$ such that 
\[ \xymatrix{ 
Y_2 \ar[d]_{j_2} \ar[r]^{\psi_2} & \tilde Y_2 \ar[d]_{[\tilde j_2]} \\
C_2 \ar[r]^{[\phi_2]} & \tilde C_2 }
\]
commutes and we can proceed inductively to obtain a realization $\tilde C_\bullet$ of $C_\bullet$.
\end{proof}

\begin{remark}
The statements in Proposition \ref{comp-eq} are equivalent to the vanishing of the \emph{Toda brackets} $\langle d_0,\ldots,d_n \rangle$ for all $n\geq 2$. The Toda bracket can be defined in several different
ways. We use the following definition: $\langle d_0,\ldots,d_n\rangle$ is a subset (possibly empty) of $\Ho(\cC)(C_{n+1},\Omega^{n-1}C_0)$ consisting of all possible lifts $\phi$ 
in diagrams of the form \eqref{postsys}, for all choices of $n$-Postnikov systems extending
$C_n \to \cdots \to C_0$.
\begin{equation}
\label{postsys}
\xymatrix{
& & \Omega^{n-1} C_0 \ar[d]&  & & \\
& & \Omega^{n-2} Y_1 \ar[d]&  & & \\
& & \vdots \ar[d] & &  \\
& & \Omega{Y^{n-2}}\ar[d] & &  &\\
& & Y_{n-1} \ar[d]^{j_{n-1}} & & Y_1 \ar[d] & \\    
C_{n+1} \ar[r]_{d_n} \ar@{-->}[uuuuurr]^{\phi} & C_n \ar[ur]^{i_{n-1}} \ar[r]_{d_{n-1}} & C_{n-1} \ar[r]_{d_{n-2}} & \cdots 
 \ar[r] & C_1 \ar[r]_{d_0} & C_0  
}
\end{equation}
$\Omega^j$ denotes the $j$-th iterate of the loop functor and the vertical maps belong to the homotopy fiber sequences which end in $Y_k \to C_k \to Y_{k-1}$ (see \cite[Chapter 6]{Ho}).

We say that a Toda bracket vanishes if it contains the zero map.
It is clear that the $n$-Postnikov system in \eqref{postsys} extends one stage if and only
if $\phi$ can be chosen to be zero. Thus, an $n$-Postnikov system
encodes the vanishing of the Toda bracket of the maps in the underlying chain complex.

The higher order cohomology operations in \cite[16.3]{Ma} are defined as Toda brackets
with the above definition. The definition of Toda bracket in \cite[IV.1]{Wh} is very similar. Whitehead works in a stable setting where cofiber and fiber sequences are equivalent. To define the Toda bracket of $<d_0,\ldots,d_n>$ he considers all possible diagrams \footnote{It is easy to check using the limited naturality of triangles that in the definition of Toda bracket in \cite{Wh} we may assume that either the map $i_0$ or $j_n$ are the identity and we are taking $j_n$ to be the identity.}
\[ 
\xymatrix{ 
& X_{n-1} \ar[rd] & X_{n-2} & & X_1 \ar[rd]^{i_1} & \\ 
X_n =C_{n+1} \ar[r]_{d_n} & C_n \ar[u] \ar[r]_{d_{n-1}} & C_{n-1} \ar[u] \ar[r] & \cdots \ar[r] & C_2 \ar[u] \ar[r]_{d_1} & C_1
}
\]
where the sequences $X_i \to C_i \to X_{i-1}$ are cofiber sequences and defines the Toda bracket to be the set of all possible extensions of $d_0 i_1$ along
\[ X_1 \to \Sigma X_2 \to \cdots \to \Sigma^{n-1}C_n \]
where $\Sigma$ denotes the suspension functor.

It is possible to check that our definition and Whitehead's agree by exhibiting both sets
as certain choices of $(n-1)$-spheres $\partial\Delta^n \subset \Hom(C_n,C_0)$ in the homotopy function complex from $C_n$ to $C_0$. For more on this perspective, see \cite[Examples 3.10,3.20]{BM}.
\end{remark}

\begin{prop}
\label{rigidifications}
Let $\cat C$ be a pointed model category, $\mathcal{CC}$ be the category of length $n$ chain complexes in $\cat C$ (with $n\leq \infty$) and $\mathcal{PS}$ be the category of $n$-Postnikov systems in $\Ho(\cat C)$. Then the functor
\[ P \colon \mathcal{CC} \to \mathcal{PS} \]
(see \eqref{functorP}) induces a bijection from weak equivalence classes in 
$\mathcal{CC}$ to isomorphism classes of objects in $\mathcal{PS}$.
\end{prop}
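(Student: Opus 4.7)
The plan is to verify that (i) $P$ is essentially surjective on isomorphism classes, and (ii) $P$ is injective on weak equivalence classes, i.e., $P(C_\bullet) \cong P(D_\bullet)$ implies $[C_\bullet] = [D_\bullet]$ in $\mathcal{CC}$. The fact that $P$ sends weak equivalences to isomorphisms, and hence descends to equivalence classes, was already noted in the proof of Proposition \ref{comp-eq}. Point (i) is the content of Proposition \ref{comp-eq}(ii)$\Rightarrow$(i): the inductive rigidification procedure there produces, for any Postnikov system $\mathcal{P}$, a chain complex $\tilde C_\bullet$ with $P(\tilde C_\bullet) \cong \mathcal{P}$, as one verifies by unwinding the definitions.

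For (ii), the idea is to enhance the rigidification of Proposition \ref{comp-eq}(ii)$\Rightarrow$(i) into a construction $R \colon \mathcal{PS} \to \mathcal{CC}$ which is functorial up to weak equivalence of chain complexes, and such that $R \circ P$ is naturally weakly equivalent to the identity. The second property is tautological: applied to $P(C_\bullet)$ using the trivial cofibrations $C_k \we \tilde C_k$ already present in the definition of $P$, one may take $R(P(C_\bullet))$ to be these factorizations, yielding a canonical weak equivalence $C_\bullet \we R(P(C_\bullet))$. An isomorphism $\alpha\colon P(C_\bullet) \xrightarrow{\cong} P(D_\bullet)$ then produces a zig-zag of weak equivalences
\[ C_\bullet \we R(P(C_\bullet)) \xrightarrow{R(\alpha)} R(P(D_\bullet)) \xleftarrow{\sim} D_\bullet, \]
proving (ii).

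The main obstacle is constructing $R(\alpha)$ and showing that it is a levelwise weak equivalence when $\alpha$ is an isomorphism. Given a morphism $\alpha \colon \mathcal{P} \to \mathcal{P}'$ of Postnikov systems and chosen rigidifications $R(\mathcal{P}), R(\mathcal{P}')$, one must inductively produce compatible maps $R(\mathcal{P})_k \to R(\mathcal{P}')_k$ in $\cat C$ realizing $\alpha$ on the underlying chain complex. At each stage, cofibrancy of $R(\mathcal{P})_k$ together with the fact that $R(\mathcal{P}')_k \fib Y_{k-1}^{\mathcal{P}'}$ is a fibration allow one to rigidify the level-$k$ component of $\alpha$ (a priori only a morphism in $\Ho(\cat C)$) to a strict lift making the relevant square commute in $\cat C$; when $\alpha$ is an isomorphism, the induced map of fibration sequences $Y_k \to C_k \to Y_{k-1}$ forces this lift to be a weak equivalence by the five-lemma applied to the long exact sequences of homotopy groups (using whatever notion of homotopy is available in $\cat C$, or, equivalently, an induction on triangles in $\Ho(\cat C)$). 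Careful bookkeeping of the homotopies arising between successive choices, so as to maintain strict commutativity of the chain complex relations, is the technical heart of the argument.
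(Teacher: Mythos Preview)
Your proposal is correct and follows essentially the same approach as the paper: essential surjectivity comes from Proposition~\ref{comp-eq}, and injectivity is obtained by lifting an isomorphism of Postnikov systems to a weak equivalence between the rigidified chain complexes using the homotopy lifting property along the fibrations $\tilde C_{k}\twoheadrightarrow \tilde Y_{k-1}$ (with $\tilde C_k$ cofibrant). Your explicit zig-zag through $R(P(-))$ and the remark that the lift is a weak equivalence by a two-out-of-three argument on the fiber sequences make the paper's rather terse proof more transparent, but the underlying mechanism is the same.
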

\begin{proof}
A chain complex realizing a Postnikov system will, by definition, realize any equivalent Postnikov system so we have already proved in Proposition \ref{comp-eq} that the functor $P$ is
essentially surjective.

On the other hand using the homotopy lifting property for fibrations, the 
construction of the chain complex from the Postnikov system in Proposition \ref{comp-eq} will also 
yield lifts of isomorphisms between Postnikov systems to weak equivalences between chain complexes in $\cat C$ (because the $\tilde C_i$ are fibrant and cofibrant, the $\tilde Y_i$ are the actual fibers of maps and the maps $\tilde C_{i+1} \to \tilde Y_i$ are fibrations).
\end{proof}

\end{document}